\numberwithin{equation}{section}
\newtheorem{Th}{Theorem}[section]
\newtheorem{Lemma}[Th]{Lemma}
\newtheorem{Coro}[Th]{Corollary}
\newtheorem{Prop}[Th]{Proposition}
\newtheorem{Eg}[Th]{Example}
\newtheorem{Rmk}[Th]{Remark}
\def\G{\mathfrak{G}}
\def\S{\mathfrak{S}}
\def\PP{\operatorname{PP}}
\def\wt{\operatorname{wt}}
\let\oldvdots\vdots
\def\vdots{\raisebox{0pc}[0.65pc]{\(\oldvdots\)}}
\let\oldddots\ddots
\def\ddots{\raisebox{0pc}[0.65pc]{\(\oldddots\)}}
\newcommand{\BPD}[2][1pc]{%
\setlength{\unitlength}{#1}
\def\FF{%
    \qbezier(0.5,0)(0.5,0.2)(0.5,0.2)
    \qbezier(1,0.5)(0.8,0.5)(0.8,0.5)
    \qbezier(0.8,0.5)(0.5,0.5)(0.5,0.2)
}
\def\JJ{%
    \qbezier(0.5,1)(0.5,0.8)(0.5,0.8)
    \qbezier(0,0.5)(0.2,0.5)(0.2,0.5)
    \qbezier(0.5,0.8)(0.5,0.5)(0.2,0.5)
    }
\def\II{%
    \qbezier(0.5,0)(0.5,0.5)(0.5,1)}%
\def\HH{%
    \qbezier(0,0.5)(0.5,0.5)(1,0.5)}%
\def\XX{\NN\HH}
\def\NN{%
    \qbezier(0.5,0)(0.5,0.3)(0.5,0.3)
    \qbezier(0.5,1)(0.5,0.7)(0.5,0.7)}%
\def\BPDfr##1{%
\begin{picture}(1,1)%
    \linethickness{0.08\unitlength}
    ##1
    \thinlines
    \color{lightgray}%
    \put(0,0){\line(0,1){1}}%
    \put(1,0){\line(0,1){1}}%
    \put(0,0){\line(1,0){1}}%
    \put(0,1){\line(1,0){1}}%
\end{picture}}
\providecommand{\OO}[1][]{
\begin{picture}(1,1)%
    \put(0,0.2){\makebox[\unitlength]{\(##1\)}}
    \color{lightgray}%
    \put(0,0){\line(0,1){1}}%
    \put(1,0){\line(0,1){1}}%
    \put(0,0){\line(1,0){1}}%
    \put(0,1){\line(1,0){1}}%
\end{picture}}
\let\O\OO%
\def\X{\BPDfr{\XX}}
\def\BX{\BPDfr{\II\HH}}
\def\F{\BPDfr{\FF}}
\def\J{\BPDfr{\JJ}}
\def\H{\BPDfr{\HH}}
\def\I{\BPDfr{\II}}
\def\B{\BPDfr{\FF\JJ}}
\def\M##1{\begin{picture}(1,1)%
    \put(0,0.2){\makebox[\unitlength]{\(##1\)}}
\end{picture}}
\def\+{\O[\color{gray}\textnormal{}]}
\begin{array}{@{\,}c@{\,}}
{\def\arraystretch{0}
\setlength{\arraycolsep}{0pc}
\color{teal}
\begin{array}{@{}l@{}}%
#2\end{array}}
\end{array}}
\definecolor{lightcyan}{rgb}{0.8,1,1}
\begin{document}

\title{Bumpless pipe dreams meet Puzzles}
\author{Neil J.Y. Fan}
\address[Neil J.Y. Fan]{Department of Mathematics, 
Sichuan University, Chengdu, Sichuan 610065, P.R. China}
\email{fan@scu.edu.cn}

\author{Peter L. Guo}
\address[Peter L. Guo]{Center for Combinatorics, LPMC, 
Nankai University, Tianjin 300071, P.R. China}
\email{lguo@nankai.edu.cn}

\author{Rui Xiong}
\address[Rui Xiong]{Department of Mathematics and Statistics, University of Ottawa, 150 Louis-Pasteur, Ottawa, ON, K1N 6N5, Canada}
\email{rxion043@uottawa.ca}

\maketitle

\begin{abstract}
Knutson and Zinn-Justin recently found a puzzle rule for the expansion of the product $\G_{u}(x,t)\cdot \G_{v}(x,t)$ of two double Grothendieck polynomials indexed by  permutations with separated descents. 
We establish its  triple Schubert calculus version in the sense of Knutson and Tao, namely, a formula for expanding  $\G_{u}(x,y)\cdot \G_{v}(x,t)$ in different secondary variables. 
Our rule is formulated in terms of  pipe puzzles,  incorporating both the structures of bumpless pipe dreams and classical puzzles. 
  As direct  applications, we recover the separated-descent puzzle formula by Knutson and Zinn-Justin 
 (by setting $y=t$) and the bumpless pipe dream model of double Grothendieck polynomials by 
Weigandt (by setting $v=\operatorname{id}$ and $x=t$). 
Moreover, we utilize the formula to partially confirm a positivity conjecture of Kirillov about applying a skew  operator to a  Schubert polynomial.  
\end{abstract}
\setcounter{tocdepth}{1}
\tableofcontents

\section{Introduction}

The core of this paper is to provide a combinatorial rule for the \emph{triple} Schubert calculus   in the torus-equivariant K-theory  of   flag manifolds, with respect to the basis of  structure sheaves indexed by permutations with separated descents. The geometry of triple Schubert calculus (particularly for the case of cohomology of Grassmannians) was revealed by Knutson and Tao \cite{KT03}. Combinatorially, we shall  give a   formula for expanding the product of two double Grothendieck polynomials in different secondary variables  
\begin{equation}\label{eq:cuvwtoind}
    \G_{u}(x,y)\cdot \G_{v}(x,t)=\sum_w c_{u,v}^w(t,y)\cdot \G_{w}(x,t),
\end{equation}
for two permutations $u$ and $v$ of $\{1,2,\ldots, n\}$ with separated descents
at position $k$, that is, 
\begin{equation}\label{eq:sepdescent}
\mathrm{maxdes}(u)\leq k\leq \mathrm{mindes}(v),
\end{equation}
where $\mathrm{maxdes}(u)=\max\{i\colon u(i)>u(i+1)\}$ and $\mathrm{mindes}(v)=\min\{i\colon v(i)>v(i+1)\}$. 
Here, for the identity permutation $\operatorname{id}=12\cdots n$,  we use the convention that $\mathrm{maxdes}(\operatorname{id})=0$ and $\mathrm{mindes}(\operatorname{id})=+\infty$.

Our  formula for $c_{u,v}^w(t,y)$, see Theorem \ref{thm:mainTh}, is described in terms of ``\emph{pipe puzzles}'', see Section \ref{MMMM} for the precise definition. Pipe puzzles enjoy the features of both bumpless pipe dreams and puzzles, as will be explained  in Section \ref{TGBF}. 
The formula includes the following specializations and applications.
\begin{itemize}
\setlength{\itemsep}{1ex}

    \item[(i)] The case $y=t$. Theorem \ref{thm:mainTh} recovers  the puzzle rule for  permutations with separated descents  by  Knutson and Zinn-Justin \cite[Theorem 1]{Pzz3}, which is manifestly positive in the sense of Anderson,   Griffeth and Miller \cite{AGM} (an equivariant  K-theory extension of  Graham's positivity theorem \cite{Gr01}).  

   \item[(ii)] The case $\beta=0$. Theorem \ref{thm:mainTh} becomes a combinatorial rule for the expansion of the product $\S_{u}(x,y)\cdot \S_{v}(x,t)$ of two Schubert polynomials in different secondary variables, see Theorem \ref{HHUU}. We point out that in the case    $y=t=0$,  Huang  \cite{Huang21} derived a tableau formula for the product $\S_{u}(x)\cdot \S_{v}(x)$ of two single Schubert polynomials for $u, v$ with separated descents. 

    \item[(iii)]
    The case that both $u$ and $v$ are   $k$-Grassmannian permutations. Theorem \ref{thm:mainTh}  extends the puzzle formula for the product $\G_{\lambda}(x,t)\cdot \G_{\mu}(x,t)$  by  Wheeler and   Zinn-Justin \cite[Theorem 2]{WZ19} (The latter formula on the one hand is an equivariant extension of   Vakil's puzzle formula  \cite{Vakil06} for the product $\G_{\lambda}(x)\cdot \G_{\mu}(x)$ of two single Grothendieck polynomials, and on the other hand is a K-theory extension of the Knutson--Tao puzzle formula \cite{KT03} for the product $s_{\lambda}(x,t)\cdot s_{\mu}(x,t)$ of two double Schur polynomials).
    
    We remark that (1) an alternative puzzle formula (different from the one  in  \cite{WZ19})  for $\G_{\lambda}(x,t)\cdot \G_{\mu}(x,t)$  was conjectured by Knutson and Vakil, and proved by Pechenik and   Yong \cite{PY15} (after a modification), (2) Wheeler and   Zinn-Justin \cite[Theorems 2'' and 3'']{WZ19} gave puzzle formulas for the product of two \emph{dual} Grothendieck polynomials  in different secondary variables, and (3) 
     puzzle formulations  of the Molev--Sagan tableau formula \cite{MS99} for the product $s_{\lambda}(x,y)\cdot s_{\mu}(x,t)$ of two double Schur polynomials  in different secondary variables  were given by  Knutson and Tao \cite[Section 6]{KT03} and Zinn-Justin \cite{ZJ2009}. 

    \item[(iv)] The case that  $k=n$ (this means  $u$ may be any permutation of $\{1,2,\ldots, n\}$), $v=\operatorname{id}$, and $x=t$. Theorem \ref{thm:mainTh} reduces to the bumpless pipe dream model of double Grothendieck polynomials by Weigandt \cite{Weigandt21}, which, by setting $\beta=0$, leads to the bumpless pipe dream model of   double Schubert polynomials    by Lam, Lee and Shimozono \cite{LLS21}.
    An alternative   proof of Weigandt's model  
was given by Buciumas and  Scrimshaw \cite{BS22} based on colored lattice models.

    \item[(v)]
Kirillov \cite[Conjecture 1]{Kirillov07} conjectured that after applying the skew  divided difference operator 
$\partial_{w/v}$ to a single Schubert polynomial $\S_u(x)$,  the resulting polynomial will have   nonnegative integer coefficients. Setting $y=0$ in Theorem \ref{HHUU} leads to a positive expansion of the product $\S_u(x)\cdot\S_v(x,t)$,  which, as will be explained in Section \ref{KiriCon},  allows us to confirm  Kirillov's prediction   for $u$ and $v$ with separated descents   and arbitrary $w$.     
     
\end{itemize}

An innovation  in our approach is finding   that   $c_{u,v}^w(t,y)$ satisfies two kinds of   recurrence relations, as given in Section \ref{RCR}. 
When $u$ and $v$ have separated descents, such recurrence relations, together with an initial condition, fully determine the computation of $c_{u,v}^w(t,y)$. This could essentially simplify the proof of Theorem  \ref{thm:mainTh}. Specifically, we may 
show that our pipe puzzle formula enjoys the same recurrence relations and initial condition (without too much efforts) by realizing pipe puzzles as an integrable  lattice model.
 We remark that the above mentioned recurrence relations are no longer  { available} in the case $y=t$.  This  means  in some sense that while the problem of computing triple Schubert structure constants is broader, 
its proof could be  simpler. 

 This paper is arranged as follows.  In Section \ref{MMMM}, we state the pipe puzzle  formula for $c_{u,v}^w(t,y)$ in the case that $u, v$  are permutations with separated descents, see Theorem  \ref{thm:mainTh}. In   Section \ref{RCR},  we provide two recurrence relations for  $c_{u,v}^w(t,y)$, and explain that such recurrence relations still work when restricted to  permutations with separated descents. 
 In Section \ref{LLMM}, we realize pipe puzzles as a lattice model, and show that it satisfies two types of  Yang--Baxter equations.  In Section \ref{EFBG}, based on the lattice model, we show that our pipe puzzle formula 
 satisfies the same recurrence relations  as $c_{u,v}^w(t,y)$,   thus completing the proof of Theorem  \ref{thm:mainTh}. 
 Section \ref{TGBF} is devoted to applications of Theorem  \ref{thm:mainTh}, mainly including those aforementioned. 

\subsection*{Ackowledgement}
We are grateful to Paul Zinn-Justin for valuable discussions and suggestions. Parts of this work were completed while the authors participated in the program ``PKU Algebra and Combinatorics Experience'' held at  Beijing International Center for Mathematical Research, Peking University, and we wish to thank Yibo Gao for the invitation and hospitality.
This work was supported by the National Natural Science Foundation of China (11971250, 12071320, 12371329).
R.X. acknowledges the partial support from the NSERC Discovery grant RGPIN-2015-04469, Canada. 

\section{Main result}\label{MMMM}

The main result is  given in  Theorem  \ref{thm:mainTh}, a separated-descent  pipe puzzle formula  for the coefficients $c_{u,v}^w(t,y)$.
Let us  begin  by giving the definition of    Grothendieck polynomials.
As usual, we use $S_n$ to denote the symmetric  group of permutations of $\{1,2,\ldots, n\}$. 
Let $\beta$ be a formal variable. 
Denote 
$$
x\ominus y = \frac{x-y}{1+\beta y}.$$
Let  $\pi_i$ be the \emph{Demazure operator}:
$$\pi_i f = \frac{(1+\beta x_{i+1})f-(1+\beta x_i)f|_{x_i\leftrightarrow x_{i+1}}}{x_i-x_{i+1}}.$$
The \emph{double Grothendieck polynomial} $\G_w(x, t)$ for $w\in S_\infty=\bigcup_{n\geq 0 }S_n$ is determined by the following two properties:
$$\G_{n\cdots 21}(x,t)=\prod_{i+j\leq n}(x_i\ominus t_j);$$
$$\pi_i\G_w(x,t)=\G_{ws_i}(x,t),\qquad \text{if }w(i)>w(i+1).$$
Here, $s_i=(i, i+1)$ is the simple transposition, and $ws_i$ is obtained from $w$ by swapping $w(i)$
and $w(i+1)$. 
Since $\pi_i^2=-\beta\pi_i$, it follows that 
\begin{align}\label{eq:piiG}
\pi_i\G_{w}(x,t)&=\begin{cases}
    \G_{ws_i}(x,t), & \text{if }w(i)>w(i+1),\\[5pt]
    -\beta\G_{w}(x,t), & \text{if }w(i)<w(i+1).
\end{cases}
\end{align}
Letting $t_i=0$ defines  the \emph{single Grothendieck polynomial} 
$$\G_w(x)=\G_w(x,0).$$
Setting $\beta=0$, we get the \emph{double (resp., single) Schubert polynomial} 
$$\S_w(x,t)=\G_w(x,t)|_{\beta=0},\qquad (\text{resp., } \S_w(x)=\G_w(x)|_{\beta=0}).$$

\begin{Rmk}\label{rmk:chavar}
There appear different   definitions for Grothendieck polynomials     in the literature, which will be equivalent after appropriate changes of variables. For example,  \cite{Pzz3} adopts the following   operator and initial condition:  
\def\X{X}\def\T{T}
$$\bar{\partial}_if=\frac{\X_{i+1}f-\X_if|_{\X_i\leftrightarrow \X_{i+1}}}{\X_{i+1}-\X_{i}},\qquad 
\mathcal{G}_{n\cdots 21}(\X,\T)=\prod_{i+j\leq n}\left(1-\X_i/\T_j\right).$$
It can be  checked that  $\mathcal{G}_{w}(\X,\T)$ can be obtained from ${\G}_{w}(x,t)$ by the following  replacements:
$$
\beta=-1,\qquad \X_i=1-x_i, \qquad 
\T_i=1-t_i.$$ 
Our definition is consistent with that used  in \cite[Section 5.1]{LLS22}.
\end{Rmk}

In the remaining of this section, we assume that  $u$ and $v$ are permutations of $S_n$ with separated descents at position $k$.  We are going to describe  our pipe puzzle formula for $c_{u,v}^w(t,y)$.  To begin, consider an $n$ by $n$ grid with   labeled  boundary:  
\begin{align}
\label{eq:boarduvw}
\BPD[1.5pc]{
\M{}\M{\theta_{v}^1}\M{\theta_v^2}\M{\cdots}\M{\cdots}\M{\theta_v^n}\\
\M{0}\+\+\M{\cdots}\M{\cdots}\+\M{\kappa_{u}^1}\\
\M{0}\+\+\M{\cdots}\M{\cdots}\+\M{\kappa_{u}^2}\\
\M{\vdots}\M{\vdots}\M{\vdots}\M{\ddots}\M{\ddots}\M{\vdots}\M{\vdots}\\
\M{\vdots}\M{\vdots}\M{\vdots}\M{\ddots}\M{\ddots}\M{\vdots}\M{\vdots}\\
\M{0}\+\+\M{\cdots}\M{\cdots}\+\M{\kappa_{u}^n}\\
\M{}\M{\eta_w^1}\M{\eta_w^2}\M{\cdots}\M{\cdots}\M{\eta_w^n}
}
\qquad 
\begin{array}{r@{\,}l}
\kappa_u^i &=\begin{cases}
    u^{-1}(i), & u^{-1}(i) \leq k,\\[5pt]
    0, & u^{-1}(i)>k.
    \end{cases}\\[4ex]
\theta_v^i &=\begin{cases}
    0, & v^{-1}(i)\leq k,\\[5pt]
    v^{-1}(i), & v^{-1}(i) >k.
    \end{cases}\\[4ex]
\eta_w^i & = w^{-1}(i).
\end{array}
\end{align}

    

 
We see that  the  nonzero labels on 
the right side are $1, \ldots, k$, and the nonzero labels on the top side are $k+1,\ldots, n$. There is  no obstruction to rebuilding  $u$, $v$ and $w$ from the boundary labeling, because of the separated-descent assumption.   For the sake of  brevity, the label 0 on the boundary will often  be omitted. See   Example \ref{eg:Schupipepuzzle} for the boundary labeling for  $u=42135$, $v=14532$,  $w=53412$, and $k=2$.

Our formula is a weighted counting of tilings of the $n$ by $n$ grid by unit tiles 
(with pipes), subject to certain conditions. 
To warm up, we   first give the formula for double Schubert polynomials.

\subsection{Statement for double Schubert polynomials} 
Assume that
\begin{equation}\label{QQSSCC}
    \S_{u}(x,y)\cdot \S_{v}(x,t)=\sum_{w}\overline{c}_{u,v}^w(t,y) \cdot \S_{w}(x,t).
\end{equation}
The admissible tiles are  
\begin{align}\label{eq:Schtile}
\begin{array}{cccccc}
\BPD[1.5pc]{\O}&
\BPD[1.5pc]{\X}&
\BPD[1.5pc]{\F}&
\BPD[1.5pc]{\J}&
\BPD[1.5pc]{\I}&
\BPD[1.5pc]{\H}\\
\end{array}
\end{align}
The curves drawn  on the tiles are referred to as \emph{pipes}. 
A tiling of \eqref{eq:boarduvw} built upon the tiles in \eqref{eq:Schtile} is a network of pipes such that 
\begin{enumerate}
    \item there are a total of $n$ pipes, among which $k$ pipes   enter horizontally from rows   on the right side labeled $1,\ldots, k$, and $n-k$ pipes enter vertically  from   columns  on the top side labeled $k+1,\ldots, n$. 
    The  pipes inherit the labels of the corresponding rows and columns. 

    \item the $n$ pipes end vertically  on the bottom side, such that the label of each pipe matches the label of the column where it ends.  
\end{enumerate}

A \emph{Schubert pipe puzzle} for $u,v,w$ is a tiling of \eqref{eq:boarduvw} with the tiles in \eqref{eq:Schtile}, subject to the following restriction on the tiles $\BPD{\X}$:
\begin{equation}\label{eq:crossRes}
\fbox{\begin{minipage}{0.75\linewidth}
The horizontal pipe in $\BPD{\X}$ must receive a smaller label. 
For example, 
    $$
    \BPD{\M{3}\M{}\M{}\\
    \I\O\O\\
    \I\O\F\M{2}\\
    \I\F\X\M{1}\\
    \M{3}\M{1}\M{2}}\text{ is allowed, while }
    \BPD{\M{3}\M{}\M{}\\
    \I\O\O\\
    \I\O\F\M{1}\\
    \I\F\BPDfr{\color{red}\XX}\M{2}\\
    \M{3}\M{2}\M{1}}\text{ is not allowed.}$$
\end{minipage}}
\end{equation}
Denote by $\PP_0(u,v,w)$ the set of Schubert pipe puzzles for $u,v,w$. 
For each $\pi \in \PP_0(u,v,w)$, define its \emph{Schubert weight} by
$$\wt_0(\pi)=\prod_{(i,j)} (t_j-y_i),$$
where the sum is over empty tiles $\BPD{\O}$ at the $(i,j)$-positions (in the matrix coordinate).

\begin{Th}\label{HHUU}
Let  $u,v \in S_n$ be permutations with separated descents   at position $k$.
For $w\in S_n$, we have
\begin{equation}\label{POIU}
    \overline{c}_{u, v}^w(t,y) = \sum_{\pi\in \PP_0(u,v,w)} \wt_0(\pi).
\end{equation} 
\end{Th}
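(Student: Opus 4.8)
The plan is to establish \eqref{POIU} by verifying that the pipe-puzzle generating function on its right-hand side obeys the same recursions in the pair $(u,v)$, and the same boundary values, as the structure constants $\overline c_{u,v}^w(t,y)$ on the left; uniqueness of such data then forces equality. (Equivalently, Theorem~\ref{HHUU} is the $\beta=0$, i.e.\ Schubert, shadow of the double Grothendieck statement announced in the introduction, and the steps below are the specialization of that proof.)

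First, the algebraic side. I would use two recursions for $\overline c_{u,v}^w(t,y)$. Since neither $\S_w(x,t)$ nor $\overline c_{u,v}^w(t,y)$ involves the alphabet $y$, applying the divided difference $\partial_i$ in the $y$-variables to \eqref{QQSSCC} and using $\partial_i^{(y)}\S_u(x,y)=-\S_{s_iu}(x,y)$ for $\ell(s_iu)<\ell(u)$ gives $\overline c_{s_iu,\,v}^{\,w}(t,y)=-\partial_i^{(y)}\overline c_{u,v}^{\,w}(t,y)$, and a short check shows $s_iu$ still has separated descents at $k$ whenever $u$ does. Dually, for any descent $j>k$ of $v$ --- so that $j$ is an ascent of $u$, hence $\partial_j^{(x)}\S_u(x,y)=0$ and $s_j$ fixes $\S_u(x,y)$ --- applying $\partial_j$ in the $x$-variables to \eqref{QQSSCC} and comparing coefficients of $\S_w(x,t)$ yields $\overline c_{u,\,vs_j}^{\,w}(t,y)=\overline c_{u,\,v}^{\,ws_j}(t,y)$ when $w(j)<w(j+1)$ and $\overline c_{u,\,vs_j}^{\,w}(t,y)=0$ when $w(j)>w(j+1)$. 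Together with the evaluation at a distinguished boundary pair --- one in which $\S_u(x,y)$ and $\S_v(x,t)$ are products of linear forms, so that \eqref{QQSSCC} can be expanded directly --- these recursions determine $\overline c_{u,v}^w$ throughout the separated-descent class (cf.\ Section~\ref{RCR}).

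Second, and this is where the work lies, I would show that the generating function $\tilde c_{u,v}^w(t,y):=\sum_{\pi\in\PP_0(u,v,w)}\wt_0(\pi)$ satisfies the same recursions and boundary values. The $v$-recursion is the benign one: passing from $v$ to $vs_j$ changes the boundary labeling \eqref{eq:boarduvw} only in columns $j$ and $j+1$ of the top edge, and a local analysis of the two tiles just below the top edge in those columns --- using that the crossing restriction \eqref{eq:crossRes} forces the pipe labelled $j$ to run horizontally through any crossing occurring there --- reproduces exactly the dichotomy according to whether $w$ ascends or descends at $j$. The $u$-recursion is the real obstacle: replacing $u$ by $s_iu$ merely transposes the right-edge labels $\kappa_u^i$ and $\kappa_u^{i+1}$ in \eqref{eq:boarduvw}, whereas $-\partial_i^{(y)}$ is a genuine divided difference in $y_i,y_{i+1}$, which enter $\wt_0$ only through the empty-tile factors $t_j-y_i$; turning a transposition of boundary data into this operator cannot be accomplished by a single local move. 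The device I would use --- and that the paper uses for the general theorem --- is to realize $\PP_0(u,v,w)$ as the set of states of an integrable vertex model and to invoke the Yang--Baxter ($R$-matrix) relation that interchanges two adjacent rows: the $R$-matrix acting on rows $i$ and $i+1$ carries the generating function to its $-\partial_i^{(y)}$-image, the sign being accounted for by the way this exchange acts on the edge weights $t_j-y_i$. The boundary values are then verified by direct inspection of the (essentially unique) pipe puzzle there. Thus the principal difficulty concentrates in the $u$-recursion: building the lattice model so that its states are exactly the pipe puzzles $\PP_0(u,v,w)$, and proving the Yang--Baxter equations it requires (the content of Sections~\ref{LLMM} and \ref{EFBG}), after which the whole argument reduces to the elementary boundary evaluation.
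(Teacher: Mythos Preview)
Your overall strategy—verify that the pipe-puzzle sum obeys the same recursions and boundary value as $\overline c_{u,v}^w$—is the paper's, and your $u$-recursion via $-\partial_i^{(y)}$ agrees with Proposition~\ref{indonu} at $\beta=0$. The gap is in your second recursion and in the initial condition it would leave behind.

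First, the boundary labeling \eqref{eq:boarduvw} records $\theta_v^i=v^{-1}(i)$ along the top, so passing from $v$ to $vs_j$ swaps the labels in columns $v(j)$ and $v(j+1)$, which are \emph{not} adjacent in general (and $w\mapsto ws_j$ swaps bottom labels in columns $w(j)$ and $w(j+1)$). Your ``local analysis of the two tiles just below the top edge in columns $j$ and $j{+}1$'' is therefore based on a misreading of the boundary; there is no two-tile argument available. The paper instead uses the \emph{left} action $v\mapsto s_iv$, $w\mapsto s_iw$ (Proposition~\ref{indonw}), which alters exactly the adjacent columns $i$ and $i{+}1$; this adjacency is precisely what makes the column Yang--Baxter equation (Theorem~\ref{thm:YBE-2}) applicable. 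So the second recursion is no more benign than the $u$-recursion: both require an $R$-matrix and a Yang--Baxter verification (Theorems~\ref{thm:indonu} and~\ref{thm:indonw}).

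Second, even if one could verify your right-multiplication recursion combinatorially, it only uses descents $j>k$ of $v$ and hence reduces $v$ only to a $k$-Grassmannian permutation, not to a single element. Your ``distinguished boundary pair'' would then have to supply $\overline c_{u_0,v}^{w}$ for \emph{every} $k$-Grassmannian $v$ and \emph{every} $w$—essentially the full Grassmannian problem, not an elementary evaluation. The paper's recursion on $w$ (left multiplication, via $t$-divided differences) instead drives $w$ down to $\operatorname{id}$; the localization $\S_w(t,t)=\delta_{w,\operatorname{id}}$ then forces $v=\operatorname{id}$ as well (Lemma~\ref{Prop:cuvid=G}), leaving the genuinely unique pipe puzzle of Proposition~\ref{Prop:initial} and Theorem~\ref{thm:initial} as the only initial value to check.
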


\begin{Rmk}\label{TREWW}
It may happen that $\S_{w}(x, t)$, $w\in S_{n'}$ with $n<n'$, appears in the expansion of $\S_{u}(x,y)\cdot \S_{v}(x,t)$. In such a case,  to compute $\overline{c}_{u, v}^w(t,y)$, one needs only to embed naturally  $ S_n$ into $S_{n'}$, and then apply Theorem \ref{HHUU} ($u$ and $v$ are now viewed as permutations in $S_{n'}$).     
\end{Rmk}


\begin{Eg}\label{eg:Schupipepuzzle}
Let $u=42135$, $v=14532$, and set $k=2$.  For $w=53412$, there are four Schubert pipe puzzles in $\PP_0(u,v,w)$: 
$$\def\o{\BPDfr{\put(0,0){\color{lightcyan}\rule{\unitlength}{\unitlength}}}}
\BPD{\M{}\M{5}\M{4}\M{}\M{3}\\
\F\X\J\o\I\M{}\\
\I\I\F\H\X\M{2}\\
\I\I\I\F\J\M{}\\
\I\I\I\I\F\M{1}\\
\I\I\I\I\I\M{}\\
\M{4}\M{5}\M{2}\M{3}\M{1}}\ \ 
\BPD{\M{}\M{5}\M{4}\M{}\M{3}\\
\F\X\J\F\J\M{}\\
\I\I\F\X\H\M{2}\\
\I\I\I\I\o\M{}\\
\I\I\I\I\F\M{1}\\
\I\I\I\I\I\M{}\\
\M{4}\M{5}\M{2}\M{3}\M{1}}\ \ 
\BPD{\M{}\M{5}\M{4}\M{}\M{3}\\
\F\X\J\F\J\M{}\\
\I\I\o\I\F\M{2}\\
\I\I\F\X\J\M{}\\
\I\I\I\I\F\M{1}\\
\I\I\I\I\I\M{}\\
\M{4}\M{5}\M{2}\M{3}\M{1}}\ \ 
\BPD{\M{}\M{5}\M{4}\M{}\M{3}\\
\o\I\I\F\J\M{}\\
\F\X\J\I\F\M{2}\\
\I\I\F\X\J\M{}\\
\I\I\I\I\F\M{1}\\
\I\I\I\I\I\M{}\\
\M{4}\M{5}\M{2}\M{3}\M{1}}
$$
Here, the empty tiles are colored. 
So it follows from \eqref{POIU} that
\begin{align*}
\overline{c}_{42135,14532}^{53412}& 
= (t_4-y_1)+(t_5-y_3)+(t_3-y_2)+(t_1-y_1).
\end{align*}
\end{Eg}

\subsection{Statement for double Grothendieck polynomials}
We allow one more admissible tile than \eqref{eq:Schtile}:
\begin{align}\label{eq:Grotile}
\begin{array}{ccccccc}
\BPD[1.5pc]{\O}&
\BPD[1.5pc]{\X}&
\BPD[1.5pc]{\F}&
\BPD[1.5pc]{\J}&
\BPD[1.5pc]{\I}&
\BPD[1.5pc]{\H}&
\BPD[1.5pc]{\B}\\
\end{array}
\end{align}
The extra tile in \eqref{eq:Grotile} is a ``bumping'' tile $\BPD{\B}$.  
We shall have the following  restrictions  on the usage of $\BPD{\B}$: 
\begin{equation}\label{eq:bumpResA}
\fbox{\begin{minipage}{0.75\linewidth}
If the two pipes in $\BPD{\B}$ are from the same side, then   the northwest pipe    must receive   a larger  label. 
For example, 
    $$
    \BPD{\M{3}\M{}\M{}\\
    \I\O\O\\
    \I\O\F\M{2}\\
    \I\F\B\M{1}\\
    \M{3}\M{2}\M{1}}\text{ is allowed, while } 
    \BPD{\M{3}\M{}\M{}\\
    \I\O\O\\
    \I\O\F\M{1}\\
    \I\F\BPDfr{\color{red}\JJ\FF}\M{2}\\
    \M{3}\M{1}\M{2}}\text{ is not allowed.}$$
\end{minipage}}
\end{equation}
\begin{equation}\label{eq:bumpResB}
\fbox{\begin{minipage}{0.75\linewidth}
If the two pipes in  $\BPD{\B}$ are from different sides, then  the northwest pipe   must  enter from the right side {(equivalently, it receives a smaller label)}. 
For example, 
    $$
    \BPD{\M{}\M{}\M{3}\\
    \O\F\X\M{2}\\
    \F\B\J\\
    \I\I\F\M{1}\\
    \M{2}\M{3}\M{1}}\text{ is allowed, while }
    \BPD{\M{}\M{}\M{3}\\
    \O\F\BPDfr{\color{red}\JJ\FF}\M{2}\\
    \F\X\J\\
    \I\I\F\M{1}\\
    \M{2}\M{3}\M{1}}\text{ is not allowed.}$$
\end{minipage}}
\end{equation}
A \emph{(Grothendieck) pipe puzzle} for $u,v,w$ is a tiling of \eqref{eq:boarduvw} with the tiles in \eqref{eq:Grotile} obeying 
the restriction \eqref{eq:crossRes} on $\BPD{\X}$, as well as 
the restrictions \eqref{eq:bumpResA} and \eqref{eq:bumpResB} on $\BPD{\B}$. 

Let $\PP(u,v,w)$ be the set of pipe puzzles for $u,v,w$. 
For $\pi\in \PP(u,v,w)$, its \emph{weight} $\wt(\pi)$ is the product of  factors contributed by all tiles of $\pi$:   at the $(i,j)$-position,
\begin{enumerate}
    \item an empty tile $\BPD{\O}$  contributes $t_j\ominus y_i$; 
    
    \item  an elbow tile $\BPD{\J}$, in which the  pipe is from the right side,  contributes $1+\beta(t_j\ominus y_i)$; 
    
    \item an elbow tile $\BPD{\F}$, in which the  pipe is from the top side,  contributes $1+\beta(t_j\ominus y_i)$; 
    
    \item a bumping  tile  $\BPD{\B}$, in which the two  pipes are  from the same side,  contributes $\beta$; 
    
    \item  a bumping tile  $\BPD{\B}$, in which the two pipes are from different sides,  contributes $\beta(1+\beta(t_j\ominus y_i))$.  

    \item any other  tile except for the above cases contributes 1.
\end{enumerate}

\begin{Th}\label{thm:mainTh}
Let  $u,v \in S_n$ be permutations with separated descents   at position $k$.
For $w\in S_n$, we have
\begin{equation}\label{PPKYC}
    c_{u,v}^w(t,y) = \sum_{\pi\in \PP(u,v,w)} \wt(\pi).
\end{equation}
\end{Th}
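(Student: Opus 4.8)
The plan is to reduce Theorem~\ref{thm:mainTh} to two ingredients that are developed later in the paper: (a) the structure constants $c_{u,v}^w(t,y)$ are characterized, within the separated-descent regime, by the two recurrence relations of Section~\ref{RCR} together with an initial condition; and (b) the pipe-puzzle generating function $\sum_{\pi\in\PP(u,v,w)}\wt(\pi)$ satisfies the \emph{same} recurrences and initial condition. By uniqueness, the two sides of \eqref{PPKYC} must agree. So the core of the argument is not a bijection but a pair of matching recursions, with integrability (Yang--Baxter) doing the heavy lifting on the puzzle side.

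First I would set up the algebraic side. Using \eqref{eq:piiG} and the fact that $\G_w(x,t)$ is a basis, one extracts from \eqref{eq:cuvwtoind} how $c_{u,v}^w$ transforms when a Demazure/divided-difference operator is applied in one of the two secondary alphabets, or when a simple transposition acts on $u$ or $v$; this yields the first recurrence (a ``descent-lowering'' recursion on $u$, $v$, or $w$) and the second (an ``isobaric'' recursion that moves a descent across the separating position $k$). One must check these recurrences do not leave the separated-descent locus — precisely the point flagged in Section~\ref{RCR} — and that the base case $u=v=\mathrm{id}$ (or the dominant case $n\cdots21$) pins down a single coefficient. This part is bookkeeping with the Demazure relations $\pi_i^2=-\beta\pi_i$ and $\pi_i\pi_{i+1}\pi_i=\pi_{i+1}\pi_i\pi_{i+1}$, plus the known transition formulas for double Grothendieck polynomials.

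Next I would treat the puzzle side via the lattice model of Section~\ref{LLMM}. The admissible tiles \eqref{eq:Grotile} with the label restrictions \eqref{eq:crossRes}, \eqref{eq:bumpResA}, \eqref{eq:bumpResB} and the weights (i)--(vi) are the Boltzmann weights of an integrable vertex model whose rows and columns carry the rapidities $t_j$, $y_i$; the partition function with boundary \eqref{eq:boarduvw} is exactly $\sum_\pi\wt(\pi)$. Invoking the two families of Yang--Baxter equations established there, I would (1) swap adjacent column rapidities or adjacent row rapidities to produce a Demazure-operator action on the partition function, reproducing the first recurrence; and (2) use the second R-matrix to realize the descent-crossing move at position $k$, reproducing the second recurrence. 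The initial condition is a direct evaluation: for the dominant boundary the tiling is forced and its weight collapses to $\prod(t_j\ominus y_i)$ over the appropriate staircase, matching $\G_{n\cdots21}$. Combining (a) and (b) with the uniqueness statement finishes the proof; Theorem~\ref{HHUU} then follows by the specialization $\beta=0$, noting the weight rules (i)--(vi) degenerate to $\wt_0$ and the bumping tile $\BPD{\B}$ drops out.

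\textbf{Main obstacle.} The delicate step is verifying that the Yang--Baxter relations genuinely intertwine the partition function with the correct one-variable operator \emph{while respecting the label-inequality constraints} \eqref{eq:crossRes}--\eqref{eq:bumpResB}. These constraints are what make $\X$ and $\B$ behave asymmetrically, and a naive R-matrix move can push a puzzle outside the admissible set or mis-sort the labels at the separating row~$k$; one has to check the train-argument (sweeping the R-vertex across a row/column) preserves admissibility tile-by-tile, including the boundary vertices. I expect this compatibility check — rather than the algebraic recurrences or the base case — to be where the real work lies, and it is presumably the content of Sections~\ref{LLMM}--\ref{EFBG}.
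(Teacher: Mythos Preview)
Your overall architecture is exactly the paper's: characterize $c_{u,v}^w(t,y)$ by recurrences plus an initial condition (Section~\ref{RCR}), realize $\sum_\pi\wt(\pi)$ as a partition function $Z_{u,v}^w(t,y)$ of a lattice model (Section~\ref{LLMM}), and then use two Yang--Baxter equations to show $Z$ obeys the same recurrences and initial condition (Section~\ref{EFBG}). So the strategy is correct and matches.

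However, your description of \emph{which} recurrences are used is off, and this matters if you try to execute the plan. There is no ``descent-crossing move at position $k$''. The two recurrences are: (i) Proposition~\ref{indonu}, an induction on $u$ via $s_iu<u$, realized on the puzzle side by attaching the row $R$-matrix $R_{\mathrm{row}}$ and sliding it across rows $i,i{+}1$ (Theorem~\ref{thm:indonu}); and (ii) Proposition~\ref{indonw}, an induction on $w$ via $s_iw>w$, with two subcases according to whether $s_iv<v$ or $s_iv>v$, realized by the column $R$-matrix $R_{\mathrm{col}}$ (Theorem~\ref{thm:indonw}). Both are just rapidity swaps; the separating position $k$ enters only through the weight table, not through a separate recurrence. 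The base case is not $u=v=\operatorname{id}$ but rather $u=u_0$ (the longest $k$-Grassmannian permutation, see~\eqref{eq:defu0}) and $w=\operatorname{id}$, where the tiling is forced (Theorem~\ref{thm:initial}); from there one descends in $\ell(u)$ using (i) and then climbs in $\ell(w)$ using (ii).

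Finally, the ``main obstacle'' you anticipate---that the train argument might violate the label constraints \eqref{eq:crossRes}--\eqref{eq:bumpResB}---does not arise. Those constraints are encoded directly into the vertex weights of Table~\ref{tab:Lmat} (inadmissible configurations simply have weight zero), so once the local Yang--Baxter identities \eqref{eq:YBEforR} and \eqref{eq:YBEforr} are verified (the paper does this by a finite computer check), the global train argument is automatic. The genuine work is the case analysis on the boundary of the $R$-matrix after the sweep, not an admissibility check along the way.
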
 

Note that Remark \ref{TREWW} is still  valid for Theorem \ref{thm:mainTh}.
We also remark that Theorem \ref{thm:mainTh} specializes to Theorem \ref{HHUU} in the case  $\beta=0$  by noticing that  $\wt(\pi)|_{\beta=0}=0$ whenever  $\pi\notin \PP_0(u,v,w)$, and $\wt(\pi)|_{\beta=0}=\wt_0(\pi)$ for $\pi\in \PP_0(u,v,w)$.

\begin{Eg}\label{eg:pipepuzzle}
Take the same setting as in Example \ref{eg:Schupipepuzzle}. There are nine pipe puzzles in $ \PP(u,v,w)$, among which the   pipe puzzles in the top row  are those appearing in Example \ref{eg:Schupipepuzzle}.  Here, the tiles with weights not equal to $1$ are colored. 
$$\def\o{\BPDfr{\put(0,0){\color{lightcyan}\rule{\unitlength}{\unitlength}}}}
\def\f{\BPDfr{\put(0,0){\color{lightcyan}\rule{\unitlength}{\unitlength}}\FF}}
\def\j{\BPDfr{\put(0,0){\color{lightcyan}\rule{\unitlength}{\unitlength}}\JJ}}
\def\b{\BPDfr{\put(0,0){\color{lightcyan}\rule{\unitlength}{\unitlength}}\FF\JJ}}
\def\Bb{\BPDfr{\put(0,0){\color{lightcyan}\rule{\unitlength}{\unitlength}}\FF\JJ}}
\BPD{\M{}\M{5}\M{4}\M{}\M{3}\\
\f\X\J\o\I\M{}\\
\I\I\F\H\X\M{2}\\
\I\I\I\f\J\M{}\\
\I\I\I\I\F\M{1}\\
\I\I\I\I\I\M{}\\
\M{4}\M{5}\M{2}\M{3}\M{1}}
\BPD{\M{}\M{5}\M{4}\M{}\M{3}\\
\f\X\J\f\J\M{}\\
\I\I\F\X\H\M{2}\\
\I\I\I\I\o\M{}\\
\I\I\I\I\F\M{1}\\
\I\I\I\I\I\M{}\\
\M{4}\M{5}\M{2}\M{3}\M{1}}
\BPD{\M{}\M{5}\M{4}\M{}\M{3}\\
\f\X\J\f\J\M{}\\
\I\I\o\I\F\M{2}\\
\I\I\F\X\j\M{}\\
\I\I\I\I\F\M{1}\\
\I\I\I\I\I\M{}\\
\M{4}\M{5}\M{2}\M{3}\M{1}}
\BPD{\M{}\M{5}\M{4}\M{}\M{3}\\
\o\I\I\f\J\M{}\\
\f\X\J\I\F\M{2}\\
\I\I\F\X\j\M{}\\
\I\I\I\I\F\M{1}\\
\I\I\I\I\I\M{}\\
\M{4}\M{5}\M{2}\M{3}\M{1}}
$$
$$\def\o{\BPDfr{\put(0,0){\color{lightcyan}\rule{\unitlength}{\unitlength}}}}
\def\f{\BPDfr{\put(0,0){\color{lightcyan}\rule{\unitlength}{\unitlength}}\FF}}
\def\j{\BPDfr{\put(0,0){\color{lightcyan}\rule{\unitlength}{\unitlength}}\JJ}}
\def\b{\BPDfr{\put(0,0){\color{lightcyan}\rule{\unitlength}{\unitlength}}\FF\JJ}}
\def\Bb{\BPDfr{\put(0,0){\color{lightcyan}\rule{\unitlength}{\unitlength}}\FF\JJ}}
\BPD{\M{}\M{5}\M{4}\M{}\M{3}\\
\f\X\J\o\I\M{}\\
\I\I\o\F\X\M{2}\\
\I\I\F\b\J\M{}\\
\I\I\I\I\F\M{1}\\
\I\I\I\I\I\M{}\\
\M{4}\M{5}\M{2}\M{3}\M{1}}
\BPD{\M{}\M{5}\M{4}\M{}\M{3}\\
\o\I\I\o\I\M{}\\
\f\X\J\F\X\M{2}\\
\I\I\F\b\J\M{}\\
\I\I\I\I\F\M{1}\\
\I\I\I\I\I\M{}\\
\M{4}\M{5}\M{2}\M{3}\M{1}}
\BPD{\M{}\M{5}\M{4}\M{}\M{3}\\
\o\I\I\f\J\M{}\\
\o\I\I\I\F\M{2}\\
\f\X\J\I\I\M{}\\
\I\I\F\X\Bb\M{1}\\
\I\I\I\I\I\M{}\\
\M{4}\M{5}\M{2}\M{3}\M{1}}
\BPD{\M{}\M{5}\M{4}\M{}\M{3}\\
\o\I\I\f\J\M{}\\
\f\X\J\I\F\M{2}\\
\I\I\o\I\I\M{}\\
\I\I\F\X\Bb\M{1}\\
\I\I\I\I\I\M{}\\
\M{4}\M{5}\M{2}\M{3}\M{1}}
\BPD{\M{}\M{5}\M{4}\M{}\M{3}\\
\f\X\J\f\J\M{}\\
\I\I\o\I\F\M{2}\\
\I\I\o\I\I\M{}\\
\I\I\F\X\Bb\M{1}\\
\I\I\I\I\I\M{}\\
\M{4}\M{5}\M{2}\M{3}\M{1}}
$$

As a result, 
\begin{align*}
{c}_{42135,14532}^{53412}& 
= (t_4\ominus y_1)(1+\beta(t_1\ominus y_1))(1+\beta(t_4\ominus y_3))\\
&\quad +(t_5\ominus y_3)(1+\beta(t_1\ominus y_1))(1+\beta(t_4\ominus y_1))\\
&\quad +(t_3\ominus y_2)(1+\beta(t_1\ominus y_1))(1+\beta(t_4\ominus y_1))(1+\beta(t_5\ominus y_3))\\
&\quad +(t_1\ominus y_1)(1+\beta(t_4\ominus y_1))(1+\beta(t_1\ominus y_2))(1+\beta(t_5\ominus y_3))\\
&\quad +\beta(t_4\ominus y_1)(t_3\ominus y_2)(1+\beta(t_1\ominus y_1))(1+\beta(t_4\ominus y_3))\\
&\quad +\beta(t_1\ominus y_1)(t_4\ominus y_1)(1+\beta(t_1\ominus y_2))(1+\beta(t_4\ominus y_3))\\
&\quad+\beta(t_1\ominus y_1)(t_1\ominus y_2)(1+\beta(t_4\ominus y_1))(1+\beta(t_1\ominus y_3))\\
&\quad + \beta(t_1\ominus y_1)(t_3\ominus y_3)(1+\beta(t_4\ominus y_1))(1+\beta(t_1\ominus y_2))\\
&\quad+ \beta(t_3\ominus y_2)(t_3\ominus y_3)(1+\beta(t_1\ominus y_1))(1+\beta(t_4\ominus y_1)).
\end{align*}
\end{Eg}


\section{Recurrence relations }\label{RCR}

In this section, we present two recurrence relations, as well as an initial condition, for $c_{u,v}^w(t,y)$, and explain that they can be used to determine the computation of $c_{u,v}^w(t,y)$ for $u, v\in S_n$ with separated descents.

Let us first review  the definition of Bruhat order on $S_n$. Let  $t_{ij}$ ($1\leq i<j\leq n$) denote the transpositions of $S_n$. Then $S_n$ is generated by the set of simple transpositions $s_i=t_{i\, i+1}$ for $1\leq i<n$. The length $\ell(w)$ of $w\in S_n$
 is the minimum number of simple transpositions appearing in any decomposition $w=s_{i_1}\cdots s_{i_m}$.
 It is well known that $\ell(w)$ equals the number of inversions of $w$:
 \[\ell(w)=\#\{(i, j)\colon 1\leq i<j\leq n,\, w(i)>w(j)\}.\]
 
Notice that $wt_{ij}$ (resp., $t_{ij}w$) is obtained from $w$ by swapping $w(i)$ and $w(j)$ (resp., the values $i$ and $j$). 
Write  $w<wt_{ij}$ if $\ell(w)<\ell(wt_{ij})$ (namely, $w(i)<w(j)$).  The transitive closure
of all relations $w<wt_{ij}$  forms the Bruhat order $\leq $ on $S_n$. 
It should be noted that the Bruhat order can be defined equivalently as the transitive closure of relations $w<t_{ij} w$ (which means $\ell(w)<\ell(t_{ij}w)$).

In the rest of this section, we shall often encounter the situation $s_iw<w$ or $s_iw>w$.  By definition, $s_iw<w$ means $i$ appears after $i+1$ in $w$, while $s_iw>w$ means $i$ appears before $i+1$ in $w$. 

The two recurrence relations for $c_{u,v}^w(t,y)$ can be stated as follows.  If there is no confusion occurring, we sometimes simply write $c_{u,v}^w$ for $c_{u,v}^w(t,y)$. 

\begin{Prop}\label{indonu}
 If $s_iu<u$, then 
\begin{equation}
    c_{s_iu,v}^w
    =-\frac{1+\beta y_i}{y_i-y_{i+1}}c_{u, v}^w
    +\frac{1+\beta y_{i+1}}{y_i-y_{i+1}}c_{u, v}^w|_{y_i\leftrightarrow y_{i+1}}.
\end{equation} 
\end{Prop}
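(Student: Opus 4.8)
The plan is to derive the recurrence by applying a suitable ``left-handed'' Demazure operator in the secondary variables $y$ to the defining relation \eqref{eq:cuvwtoind}, and then reading off its effect on the coefficients $c_{u,v}^w(t,y)$. For functions of $y=(y_1,y_2,\ldots)$, introduce the operator
\[
\mathcal{D}_i f=\frac{(1+\beta y_{i+1})\,f|_{y_i\leftrightarrow y_{i+1}}-(1+\beta y_i)\,f}{y_i-y_{i+1}}
=-\frac{1+\beta y_i}{y_i-y_{i+1}}\,f+\frac{1+\beta y_{i+1}}{y_i-y_{i+1}}\,f|_{y_i\leftrightarrow y_{i+1}},
\]
which one checks equals $s_i^{(y)}\pi_i^{(y)}s_i^{(y)}$, where $\pi_i^{(y)}$ is the Demazure operator of the introduction written in the $y$-variables and $s_i^{(y)}$ swaps $y_i$ and $y_{i+1}$; at $\beta=0$ it reduces to $-\partial_i^{(y)}$, the operator implementing $\mathfrak{S}_{s_iu}(x,y)=-\partial_i^{(y)}\mathfrak{S}_u(x,y)$. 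The single ingredient needed is the ``left'' analogue of the recursion $\pi_i\G_w(x,t)=\G_{ws_i}(x,t)$ for $w(i)>w(i+1)$, namely the identity
\[
\mathcal{D}_i\,\G_u(x,y)=\G_{s_iu}(x,y)\qquad\text{whenever }s_iu<u.
\]

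Granting this identity, the proof is purely formal. The polynomials $\G_v(x,t)$ and each $\G_w(x,t)$ are free of the variables $y$, so applying $\mathcal{D}_i$ to \eqref{eq:cuvwtoind} affects only $\G_u(x,y)$ on the left and the coefficients on the right:
\[
\G_{s_iu}(x,y)\cdot\G_v(x,t)=\sum_w\bigl(\mathcal{D}_i\,c_{u,v}^w(t,y)\bigr)\cdot\G_w(x,t).
\]
Expanding the left-hand side by \eqref{eq:cuvwtoind} with $s_iu$ in place of $u$, and invoking the linear independence of double Grothendieck polynomials over the relevant coefficient ring, we obtain $c_{s_iu,v}^w=\mathcal{D}_i\,c_{u,v}^w$, which is exactly the asserted formula. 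Note that this derivation uses neither the separated-descent hypothesis nor the pipe puzzle model; it is a statement about the structure constants themselves, and the hypothesis only becomes relevant later, when such recurrences together with an initial condition are used to pin down the $c_{u,v}^w$.

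Thus the real work is the left-handed identity $\mathcal{D}_i\G_u=\G_{s_iu}$ for $s_iu<u$. If it is not already available in the paper's normalization $x\ominus y=(x-y)/(1+\beta y)$ (it is essentially the content of \cite[Section 5.1]{LLS22}), I would prove it by induction on $\ell(u)$, descending from the longest element $w_0=n\cdots 21$. For $u=w_0$ one computes directly on $\G_{w_0}(x,y)=\prod_{a+b\le n}(x_a\ominus y_b)$: after grouping the factors, everything other than $x_{n-i}\ominus y_i$ is symmetric in $y_i,y_{i+1}$ (namely the factors containing neither, together with $\prod_{a\le n-i-1}(x_a\ominus y_i)(x_a\ominus y_{i+1})$), and since $\mathcal{D}_i$ is linear over such symmetric polynomials while $\mathcal{D}_i(x_{n-i}\ominus y_i)=1$ by a one-line check, the output is precisely $\G_{s_iw_0}(x,y)$, using that $s_iw_0=w_0s_{n-i}$ is dominant with Rothe diagram the staircase minus the corner cell $(n-i,i)$. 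For the inductive step, given $u\ne w_0$ with $s_iu<u$, choose any ascent $j$ of $u$, i.e.\ $u(j)<u(j+1)$; the hypothesis $s_iu<u$ amounts to $u^{-1}(i)>u^{-1}(i+1)$, which precisely rules out the one obstructive configuration $u(j)=i,\ u(j+1)=i+1$, and one then checks that $u'=us_j$ again satisfies $s_iu'<u'$ while $\pi_j^{(x)}\G_{s_iu'}=\G_{(s_iu')s_j}=\G_{s_iu}$. Since $\mathcal{D}_i$ and $\pi_j^{(x)}$ act on disjoint sets of variables, they commute, so
\[
\mathcal{D}_i\G_u=\mathcal{D}_i\pi_j^{(x)}\G_{u'}=\pi_j^{(x)}\mathcal{D}_i\G_{u'}=\pi_j^{(x)}\G_{s_iu'}=\G_{s_iu},
\]
closing the induction.

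The main obstacle is exactly pinning down the $\beta$-corrected form of $\mathcal{D}_i$ forced by the $\ominus$-normalization and verifying $\mathcal{D}_i\G_u=\G_{s_iu}$; the delicate combinatorial point buried in that verification is the observation that $s_iu<u$ excludes an adjacent occurrence of the pattern $i,i+1$ in $u$, which is what keeps the descent along right multiplications valid. Everything downstream of that identity is bookkeeping.
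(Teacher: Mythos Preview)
Your proposal is correct and follows essentially the same approach as the paper: introduce the left Demazure operator $\varphi_i$ in the $y$-variables (your $\mathcal{D}_i$ is exactly the paper's $\varphi_i$), apply it to \eqref{eq:cuvwtoind}, use the identity $\varphi_i\G_u(x,y)=\G_{s_iu}(x,y)$ for $s_iu<u$ (the paper's Proposition~\ref{Prop:LDO}), and read off the recurrence by linear independence. Your inductive proof of the left Demazure identity is also the same argument as the paper's, the only cosmetic difference being that the paper phrases the induction via the $0$-Hecke product $\pi_j\G^w=\G^{w*s_j}$, which absorbs your case analysis verifying $s_iu'<u'$ and $(s_iu')(j)>(s_iu')(j+1)$ into the monoid identity $s_{n-i}*ws_j*s_j=s_{n-i}*w$.
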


\begin{Prop}\label{indonw}
If $s_iw>w$, then 
\begin{align}\label{PURW-1}
c_{u, v}^{s_iw} & 
=\begin{cases}
  -\dfrac{1+\beta t_{i+1}}{t_i-t_{i+1}}c_{u, v}^w|_{t_i\leftrightarrow t_{i+1}}
+\dfrac{1+\beta t_{i}}{t_i-t_{i+1}}c_{u, v}^w
+ c_{u,s_iv}^w|_{t_i\leftrightarrow t_{i+1}}, 
& s_iv<v,\\[15pt]
  -\dfrac{1+\beta t_{i}}{t_i-t_{i+1}}c_{u, v}^w|_{t_i\leftrightarrow t_{i+1}}
+\dfrac{1+\beta t_{i}}{t_i-t_{i+1}}c_{u, v}^w,
& s_iv>v. 
\end{cases}
\end{align}
\end{Prop}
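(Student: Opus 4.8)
\textbf{Proof proposal for Proposition \ref{indonw}.}

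The plan is to apply a Demazure-type operator in the secondary alphabet $t=(t_1,t_2,\dots)$ to the defining identity \eqref{eq:cuvwtoind}. Let $s_i$ also denote the transposition $t_i\leftrightarrow t_{i+1}$ acting on functions of $t$, let $\partial_i f=(f-s_i f)/(t_i-t_{i+1})$ (note that $\partial_i f$ is $s_i$-invariant), and set
\[
\widetilde{\pi}_i f \;=\; \frac{(1+\beta t_{i+1})\,s_i f-(1+\beta t_i)\,f}{t_i-t_{i+1}},
\]
extended so that $\widetilde{\pi}_i$ acts in the $t$-variables only and fixes $x$ and $y$. The first ingredient is the left-multiplication counterpart of \eqref{eq:piiG},
\[
\widetilde{\pi}_i\,\G_\sigma(x,t)=
\begin{cases}
\G_{s_i\sigma}(x,t), & s_i\sigma<\sigma,\\[4pt]
-\beta\,\G_\sigma(x,t), & s_i\sigma>\sigma,
\end{cases}
\]
which follows from the duality $\G_\sigma(x,t)=\G_{\sigma^{-1}}(t^{\vee},x^{\vee})$, $a^{\vee}_j=-a_j/(1+\beta a_j)$, together with \eqref{eq:piiG}; the precise normalization of $\widetilde{\pi}_i$ is pinned down by the base case $\G_{n\cdots 21}(x,t)=\prod_{a+b\le n}(x_a\ominus t_b)$, on which $\widetilde{\pi}_i$ affects only the factor $x_{n-i}\ominus t_i$, sending it to $1$ and producing $\G_{s_i(n\cdots 21)}$.

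The second ingredient is the twisted Leibniz rule $\widetilde{\pi}_i(fg)=(s_i f)(\widetilde{\pi}_i g)-(1+\beta t_i)(\partial_i f)\,g$, a one-line verification. Now apply $\widetilde{\pi}_i$ to \eqref{eq:cuvwtoind}. The left side equals $\G_u(x,y)\cdot\widetilde{\pi}_i\G_v(x,t)$ because $\widetilde{\pi}_i$ fixes $\G_u(x,y)$; by the left recursion it is $\sum_w c_{u,s_iv}^w\,\G_w(x,t)$ when $s_iv<v$ and $-\beta\sum_w c_{u,v}^w\,\G_w(x,t)$ when $s_iv>v$ --- precisely the case split in the statement. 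On the right, the Leibniz rule gives $\widetilde{\pi}_i\big(\sum_w c_{u,v}^w\G_w\big)=\sum_w(s_i c_{u,v}^w)(\widetilde{\pi}_i\G_w)-(1+\beta t_i)\sum_w(\partial_i c_{u,v}^w)\G_w$; splitting the first sum according to whether $s_iw<w$ or $s_iw>w$, invoking the left recursion for $\widetilde{\pi}_i\G_w$, and reindexing the lowering terms by the involution $w\mapsto s_iw$, one obtains
\[
\widetilde{\pi}_i\Big(\sum_w c_{u,v}^w\G_w\Big)
=\sum_{w:\,s_iw>w}\big(s_i c_{u,v}^{s_iw}-\beta\, s_i c_{u,v}^{w}\big)\G_w
-(1+\beta t_i)\sum_w\big(\partial_i c_{u,v}^{w}\big)\G_w.
\]

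Since $\{\G_w(x,t)\}$ is linearly independent over $\mathbb{Z}[\beta][t,y]$, we may equate the coefficient of $\G_w$ on the two sides for each $w$ with $s_iw>w$. In the case $s_iv>v$ this reads $s_i c_{u,v}^{s_iw}-\beta\, s_i c_{u,v}^{w}-(1+\beta t_i)\partial_i c_{u,v}^{w}=-\beta c_{u,v}^{w}$; using $\beta\, s_i c_{u,v}^{w}-\beta c_{u,v}^{w}=-\beta(t_i-t_{i+1})\partial_i c_{u,v}^{w}$ this collapses to $s_i c_{u,v}^{s_iw}=(1+\beta t_{i+1})\partial_i c_{u,v}^{w}$, and applying $s_i$ (which fixes $\partial_i$) gives the second line of \eqref{PURW-1}. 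In the case $s_iv<v$ the right-hand side is $c_{u,s_iv}^{w}$ instead, and the same manipulation now leaves an extra term equal, after the final $s_i$, to $c_{u,s_iv}^{w}|_{t_i\leftrightarrow t_{i+1}}$, yielding the first line of \eqref{PURW-1}. I expect the one genuinely delicate point to be establishing the left recursion with the right normalization of $\widetilde{\pi}_i$; the rest is bookkeeping of $\beta$-weights and $s_i$-swaps through a short linear solve. (Proposition \ref{indonu} is simpler still: applying the $y$-alphabet version of $\widetilde{\pi}_i$ to \eqref{eq:cuvwtoind} acts as that operator on each coefficient $c_{u,v}^w(t,y)$, since $\G_w(x,t)$ is free of $y$, and gives the stated formula directly.)
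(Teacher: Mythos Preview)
Your proposal is correct and follows essentially the same route as the paper: define the left Demazure operator $\widetilde{\pi}_i$ (the paper calls it $\varpi_i$, with the identical formula), apply it to \eqref{eq:cuvwtoind}, expand the right side via the same twisted Leibniz rule, and compare coefficients of $\G_w$ with $s_iw>w$; your algebra matches the paper's line for line. The only difference is how you justify the left recursion $\widetilde{\pi}_i\G_\sigma=\G_{s_i\sigma}$ or $-\beta\G_\sigma$: you invoke the duality $\G_\sigma(x,t)=\G_{\sigma^{-1}}(t^{\vee},x^{\vee})$, whereas the paper proves it by induction on $\ell(w)$ using the $0$-Hecke monoid and the commutation of $\pi_j$ with $\varpi_i$.
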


In order to more quickly go into the proof of Theorem  \ref{thm:mainTh}, we put the proofs of Propositions \ref{indonu} 
and \ref{indonw}  in Appendix \ref{sec:indform}.  

To give the initial condition, we need the following   localization 
\begin{equation}\label{eq:locofG}
    \G_w(t,t)=\begin{cases}
        1, & w =\operatorname{id},\\ 
        0, & \text{otherwise}.
    \end{cases}
\end{equation}
This is the very special case of the general localization formula for Grothendieck polynomials, 
see for example Buch and  Rim\'{a}nyi \cite{BR} and the references therein. 
Taking $x=t$ in \eqref{eq:cuvwtoind} and then applying \eqref{eq:locofG}, we obtain  the following relationship.

\begin{Lemma}\label{Prop:cuvid=G}
We have 
\begin{equation*}
    c_{u, v}^{\operatorname{id}}(t,y)
    = \begin{cases}
        \G_u(t,y), & \text{if }v=\operatorname{id},\\[5pt]
        0, & \text{otherwise}.
    \end{cases}
\end{equation*}
\end{Lemma}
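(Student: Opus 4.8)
The plan is to substitute $x = t$ into the defining expansion \eqref{eq:cuvwtoind} and read off the consequence of the localization identity \eqref{eq:locofG}. Setting $x = t$ turns the left-hand side into $\G_u(t,y)\cdot \G_v(t,t)$ and the right-hand side into $\sum_w c_{u,v}^w(t,y)\cdot \G_w(t,t)$. By \eqref{eq:locofG}, the factor $\G_v(t,t)$ equals $1$ if $v=\operatorname{id}$ and $0$ otherwise, and likewise $\G_w(t,t) = \delta_{w,\operatorname{id}}$ collapses the sum on the right to the single term $c_{u,v}^{\operatorname{id}}(t,y)$. So the identity becomes $\G_u(t,y)\cdot[v=\operatorname{id}] = c_{u,v}^{\operatorname{id}}(t,y)$, which is exactly the claimed formula: when $v=\operatorname{id}$ we get $c_{u,\operatorname{id}}^{\operatorname{id}}(t,y) = \G_u(t,y)$, and when $v\neq\operatorname{id}$ we get $c_{u,v}^{\operatorname{id}}(t,y) = 0$.

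The one point that needs a word of care is the legitimacy of the substitution $x = t$, i.e.\ that the expansion \eqref{eq:cuvwtoind} is an identity of polynomials (or at worst of rational functions regular at $x=t$) in which specializing the $x$-variables to the $t$-variables is permitted, and that the sum over $w$ is finite so the specialization can be performed termwise. Since \eqref{eq:cuvwtoind} is a polynomial identity in the $x_i$ with coefficients in $\mathbb{Z}[\beta][y,t]$ and only finitely many $w$ occur, this is immediate; I would state it in one sentence. I would also note explicitly that $\G_w(t,t)$ here means the specialization $x_i \mapsto t_i$ of $\G_w(x,t)$, matching the convention under which \eqref{eq:locofG} is stated.

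There is essentially no obstacle: the entire content is the localization \eqref{eq:locofG}, which is quoted as known, plus a one-line substitution. The only thing worth flagging — and it is bookkeeping, not difficulty — is keeping straight which copy of the secondary variables is being specialized (the $x$'s are sent to the $t$'s, the genuine secondary variables $t$ and $y$ are untouched), so that both $\G_v(t,t)$ and every $\G_w(t,t)$ on the right are killed by the \emph{same} instance of \eqref{eq:locofG}.
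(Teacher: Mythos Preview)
Your proposal is correct and matches the paper's own argument essentially line for line: the paper simply says ``Taking $x=t$ in \eqref{eq:cuvwtoind} and then applying \eqref{eq:locofG}'' and records the result. Your additional remarks about finiteness of the sum and the legitimacy of the specialization are sound (with the minor caveat that the coefficients live in a localization of $\mathbb{Z}[\beta][y,t]$ rather than $\mathbb{Z}[\beta][y,t]$ itself, due to the $1+\beta y_i$ and $1+\beta t_j$ denominators in $\ominus$; this does not affect the argument).
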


Denote by $u_0=n(n{-}1)\cdots (n{-}k{+}1)\,1 2\cdots (n{-}k)\in S_n$ the unique longest permutation among those  $u\in S_n$ with
$\mathrm{maxdes}(u)\leq k$:
\begin{equation}\label{eq:defu0}
    u_0(i)=\begin{cases}
n+1-i, & i\leq k,\\ 
i-k,   & k<i\leq n.
\end{cases}
\end{equation}
By direct computation, we have 
$$\G_{u_0}(x, t)=\prod_{i=1}^k \prod_{j=1}^{n-i}(x_i\ominus t_j).$$
Actually, this is clearly true for $k=n$.
If the statement is true for $k$, then  applying operators $\pi_{1}\cdots \pi_{k}$, we can compute  the case of $k-1$. 
This, along with Lemma \ref{Prop:cuvid=G}, leads to the  initial condition. 

\begin{Prop}\label{Prop:initial}
For $v\in S_n$,  
\begin{equation*}
    c_{u_0,v}^{\operatorname{id}}
    = \begin{cases}\displaystyle
        \prod_{i=1}^k\prod_{j=1}^{n-i}(t_i\ominus y_j), & \text{if }v=\operatorname{id},\\[5pt]
        0, & \text{otherwise}.
    \end{cases}
\end{equation*}
\end{Prop}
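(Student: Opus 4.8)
The plan is to deduce Proposition~\ref{Prop:initial} directly from Lemma~\ref{Prop:cuvid=G} together with the explicit product formula for $\G_{u_0}(x,t)$ stated just above. First I would observe that Lemma~\ref{Prop:cuvid=G}, applied with $u$ replaced by $u_0$, immediately gives $c_{u_0,v}^{\operatorname{id}}(t,y)=0$ whenever $v\neq\operatorname{id}$, which disposes of the second case with no further work. So the whole content is the first case: showing
\[
c_{u_0,\operatorname{id}}^{\operatorname{id}}(t,y)=\G_{u_0}(t,y)=\prod_{i=1}^{k}\prod_{j=1}^{n-i}(t_i\ominus y_j).
\]
The left equality is again Lemma~\ref{Prop:cuvid=G} (with $v=\operatorname{id}$), so what remains is to evaluate $\G_{u_0}(x,t)$ at $x=t$... wait — more precisely, $\G_{u_0}(t,y)$ is obtained from $\G_{u_0}(x,y)$ by the substitution $x\mapsto t$, so I just need the closed form $\G_{u_0}(x,t)=\prod_{i=1}^{k}\prod_{j=1}^{n-i}(x_i\ominus t_j)$ and then rename the secondary variable.

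To establish that closed form I would argue by downward induction on $k$. The base case $k=n$ is exactly the defining initial condition $\G_{n\cdots 21}(x,t)=\prod_{i+j\le n}(x_i\ominus t_j)$, since $u_0=n(n{-}1)\cdots 1$ when $k=n$ and the index set $\{(i,j):i+j\le n\}$ equals $\{(i,j):1\le i\le n,\ 1\le j\le n-i\}$. For the inductive step, suppose the formula holds for $k$; I want to pass to $k-1$. The permutation $u_0^{(k)}$ for parameter $k$ satisfies $u_0^{(k)}(1)>u_0^{(k)}(2)>\cdots>u_0^{(k)}(k)$, so it has descents at positions $1,\dots,k-1$, and one checks that applying the product of Demazure operators $\pi_1\pi_2\cdots\pi_{k-1}$ (in the appropriate order, following $\pi_i\G_w=\G_{ws_i}$ when $w(i)>w(i+1)$) carries $\G_{u_0^{(k)}}$ to $\G_{u_0^{(k-1)}}$, because the corresponding sequence of right multiplications by $s_1,\dots,s_{k-1}$ sorts the first $k$ values of $u_0^{(k)}$ into the pattern of $u_0^{(k-1)}$ while each intermediate permutation still has the required descent. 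Then a direct computation of $\pi_1\cdots\pi_{k-1}$ applied to $\prod_{i=1}^{k}\prod_{j=1}^{n-i}(x_i\ominus t_j)$ yields $\prod_{i=1}^{k-1}\prod_{j=1}^{n-i}(x_i\ominus t_j)$: the variable $x_k$ appears only through the factors $\prod_{j=1}^{n-k}(x_k\ominus t_j)$, and pushing the operators through peels off exactly the $i=k$ row. (Alternatively, one can cite the known formula for $\G_w$ of dominant permutations, of which $u_0$ is one after the standard flip, but the inductive verification is self-contained and short.)

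I do not expect any serious obstacle here; the statement is essentially a bookkeeping consequence of results already in hand. The one place requiring a little care is the inductive step for the product formula for $\G_{u_0}$: one must check that the chosen reduced word for the relevant quotient of longest elements really does stay within the locus where $\pi_i$ acts as $\G_w\mapsto\G_{ws_i}$ (i.e.\ $w(i)>w(i+1)$ at each stage, so that no $-\beta$ term from \eqref{eq:piiG} intervenes), and that the Demazure operator computation on the explicit product correctly removes the last row of factors. Since the paper already asserts ``By direct computation'' for this formula, I would present the argument at the level of indicating the induction and the operator action, and invoke Lemma~\ref{Prop:cuvid=G} to finish. The case distinction on $v$ then assembles the two branches of the claimed formula.
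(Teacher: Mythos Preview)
Your approach is correct and essentially identical to the paper's: invoke Lemma~\ref{Prop:cuvid=G} to reduce to evaluating $\G_{u_0}(t,y)$, and establish the closed form $\G_{u_0}(x,t)=\prod_{i=1}^k\prod_{j=1}^{n-i}(x_i\ominus t_j)$ by downward induction on $k$ via Demazure operators, starting from the base case $k=n$.

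One small slip in your inductive step: the right multiplications carrying $u_0^{(k)}$ to $u_0^{(k-1)}$ are by $s_k,s_{k+1},\ldots,s_{n-1}$ (they push the value $n{-}k{+}1$ from position $k$ to position $n$), so the relevant operators are $\pi_k,\ldots,\pi_{n-1}$, not $\pi_1,\ldots,\pi_{k-1}$ as you wrote; indeed the length drop is $\ell(u_0^{(k)})-\ell(u_0^{(k-1)})=n-k$, not $k-1$. The paper is equally terse at this spot (it writes ``$\pi_1\cdots\pi_k$''), so this is a bookkeeping detail to get right when you actually carry out the computation, not a gap in the strategy.
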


Propositions \ref{indonu} and \ref{indonw}
are valid for any $u, v, w\in S_n$. We  explain that such recurrences are  closed when restricting $u, v\in S_n$ 
to permutations with separated descents at $k$. 
In other words, we could use    Propositions \ref{indonu} and \ref{indonw} (only applied to permutations with separated descents at $k$), along with  the initial condition in Proposition  \ref{Prop:initial}, to  compute  $c_{u,v}^w(t,y)$ for  any $u, v \in S_n$ with separated descents at $k$. 

\begin{itemize}

\setlength{\itemsep}{1ex}

    \item First, compute $c_{u,v}^{\mathrm{id}}(t,y)$ for $w=\mathrm{id}$. The initial case is for the longest permutation $u=u_0$, as  done  in Proposition \ref{Prop:initial}. We next consider $c_{u,v}^{\mathrm{id}}(t,y)$ with $\ell(u)<\ell(u_0)$. Since $u\neq u_0$, one can always choose an integer   $i$ among the first $k$ values $u(1),\ldots, u(k)$, such that $i$ appears before $i+1$ in $u$. For example, given  $u=7423156$ and $k=4$,
   we may choose $i=4$ or $i=2$. 
    
    Now  we have $u< s_iu\in S_n$. It is easily checked that  $\mathrm{maxdes}(s_iu)\leq k$. 
    Set $u'=s_iu$. By induction on the length of $u$, the value of  $c_{u',v}^{\mathrm{id}}(t,y)$ is known, which allows us to compute  $ c_{u,v}^{\mathrm{id}}(t,y)=c_{s_iu',v}^{\mathrm{id}}(t,y)$ from   $c_{u',v}^{\mathrm{id}}(t,y)$
    by means of  Proposition \ref{indonu}.

 \item Second, compute $c_{u,v}^{w}(t,y)$ for $\ell(w)>0$. In this case, choose any $s_i$ such that $s_iw<w$.  It is also easily checked that if $s_iv<v$, then we still have $\mathrm{mindes}(s_i v)\geq k$.   Set $w'=s_iw$.  By induction on the length of $w$, the values of $c_{u,v}^{w'}(t,y)$ and $c_{u,s_iv}^{w'}(t,y)$ are  known. Applying Proposition \ref{indonw}, we may deduce  $c_{u,v}^{w}(t,y)=c_{u,v}^{s_iw'}(t,y)$ from $c_{u,v}^{w'}(t,y)$ and $c_{u,s_iv}^{w'}(t,y)$.
\end{itemize}

\section{Integrable  lattice  models}\label{LLMM}

Throughout this section, we assume that $u, v\in S_n$ are permutations with separated descents at $k$, and $w$ is any permutation in $S_n$. 
We  shall realize the pipe puzzles in  $\PP(u,v,w)$ as a (colored) lattice model, denoted  $\mathrm{L}(u,v,w)$, so that the  right-hand side of 
\eqref{POIU} is equal to the partition function of $\mathrm{L}(u,v,w)$. For more background about lattice models, we refer the reader to, for example,  \cite{BBBG21, BS22, BTW20, ZJ09}. We verify  that $L(u,v,w)$ is  integrable, in the sense  that  it satisfies Yang--Baxter equations with respect to particular choices of $R$-matrices.  

\subsection{Lattice model}
Consider a square grid with $n$ horizontal lines and $n$ vertical lines. The intersection point  of  two lines  will be a vertex (so there are a total of $n^2$ vertices).  
The lines  between two vertices are called edges. We shall also attach additional half edges to the vertices on the boundary, so that there are four half edges around each vertex.  

A \emph{state} is a labeling of all the (half) edges with labels from $\{0, 1,2\ldots, n\}$, with a fixed boundary condition which is  consistent with that in  \eqref{eq:boarduvw}:
the left half edges are all labeled 0, the right half edges are labeled $\kappa_u^1, \ldots, \kappa_u^n$ from top to bottom,  the top (resp., bottom) half edges are labeled $\theta_v^1, \ldots, \theta_v^n$ (resp., $\eta_w^1, \ldots, \eta_w^n$) from left to right. The label of each (half) edge will be marked with a circle, and a vertex will be formally assigned  a parameter $x$.  
A state is \emph{admissible} if the local configurations around each vertex (namely, the labeled half edges adjacent to each vertex) satisfy exactly  one of the conditions as listed  in the middle column of  Table \ref{QQWWEE}.
Moreover, each  allowable local configuration is assigned    a weight as given in the first column  of Table \ref{QQWWEE}.

\def\C#1{\raisebox{0pc}[0.6pc][-0.2pc]{\makebox[0.58pc]{%
\makebox[0pc]{\Large$\bigcirc$}%
\makebox[0pc]{${#1}$}%
}}}
\def\CC#1{\raisebox{-0.3pc}[0.6pc][-0.4pc]{{%
{\setlength{\unitlength}{1.2pc}%
\begin{picture}(1,1)%
\put(0.5,0.5){\oval(1.5,1.1)}
\put(0.5,0.27){\makebox[0pc]{{${#1}$}}}%
\end{picture}%
}}}}
\newcommand{\RmatA}[5][x]{
{\rule[-2.5pc]{0pc}{5.5pc}
\begin{array}{c}
\xymatrix@=0.5pc{
&\C{#2}\\
\C{#4}&#1\ar@{-}[u]\ar@{-}[d]\ar@{-}[l]\ar@{-}[r]&\C{#3}\\
&\C{#5}\\
}
\end{array}
}}
\newcommand{\RmatB}[5][x]{
{\rule[-2.5pc]{0pc}{5.5pc}
\begin{array}{c}
\xymatrix@=0.5pc{
\C{#4}&&\C{#2}\\
&#1\ar@{-}@/_/[ul]\ar@{-}@/^/[ur]\ar@{-}@/_/[dr]\ar@{-}@/^/[dl]\\
\C{#5}&&\C{#3}
}
\end{array}
}}
\newcommand{\RmatC}[5][x]{
{\rule[-2.5pc]{0pc}{5.5pc}
\begin{array}{c}
\xymatrix@=0.5pc{
\C{#2}&&\C{#3}\\
&#1\ar@{-}@/_/[ur]\ar@{-}@/^/[ul]\ar@{-}@/_/[dl]\ar@{-}@/^/[dr]\\
\C{#4}&&\C{#5}
}
\end{array}
}}

\begin{table}[hhhhh]
$$
\def\a{N}\def\b{E}\def\c{W}\def\d{S}
\RmatA{\a}{\b}{\c}{\d} 
:\ \ \ 
\begin{array}{ccc}\hline
\text{weights} & \text{conditions} & \text{tiles}\\\hline 
x &         \a=\b=\c=\d=0 & \BPD{\O} \\[3pt]
1 &         \b=\c=0<\a=\d& \BPD{\I} \\[3pt]
1 &         \a=\d=0<\b=\c& \BPD{\H} \\[3pt]
1 &         0<\b=\c<\a=\d & \BPD{\X}\\[3pt]
1+\beta x & \b=\d=0<\a=\c\leq k & \BPD{\J} \\[3pt]
1 &         \b=\d=0\text{ and }k<\a=\c & \BPD{\J} \\[3pt]
1 &         \a=\c=0<\b=\d\leq k & \BPD{\F} \\[3pt]
1+\beta x&  \a=\c=0\text{ and }k<\b=\d & \BPD{\F} \\[3pt]
\beta &     0<\b=\d<\a=\c\leq k & \BPD{\B}\\[3pt]
\beta &     k<\b=\d<\a=\c & \BPD{\B}\\[3pt]
\beta(1+\beta x) & 0<\a=\c\leq k<\b=\d & \BPD{\B}
\\
\hline
\end{array}
$$
\caption{Weights, local configurations, and tiles.}
\label{QQWWEE}
\end{table}

Each configuration around a vertex naturally corresponds to a tile that is used to define  a pipe puzzle, 
as illustrated in the last column of Table \ref{QQWWEE}, with   pipes inheriting the labels of edges. We display the information in Table \ref{QQWWEE} more intuitively in Table \ref{tab:Lmat}. Therefore, each admissible state generates a pipe puzzle, and vice versa. See Figure \ref{fig:enter-label-YYY} for   an  admissible state and its corresponding pipe puzzle.

\begin{table}[hhhhh]
$$\begin{array}{@{}c@{}|@{}c@{}|@{}c@{}}\hline
    \rule[-2.5pc]{0pc}{5.5pc}
    \RmatA[\BPD{\O}]{0}{0}{0}{0}&
    \RmatA[\BPD{\X}]{A}{a}{a}{A}&
    \RmatA[\BPD{\I}]{a}{0}{0}{a}
    \RmatA[\BPD{\H}]{0}{a}{a}{0}
    \\
\hline\vphantom{\dfrac{1}{2}}
    x & 
    1\quad (0<a<A) &
    1\quad (0<a) \\\hline
\hline
    \rule[-2.5pc]{0pc}{5.5pc}
    \RmatA[\BPD{\J}]{a}{0}{a}{0}&
    \RmatA[\BPD{\F}]{0}{a}{0}{a}&
    \RmatA[\BPD{\B}]{A}{a}{A}{a}
    \\
\hline\vphantom{\dfrac{1}{2}}
    \begin{array}{rl}
    1+\beta x & (0<a\leq k)\\
        1 & (k<a) 
    \end{array} & 
    \begin{array}{rl}
    1 & (0<a\leq k)\\
    1+\beta x & (k<a) 
    \end{array}&
    \begin{array}{rl}
    \beta & (0<a<A\leq k)\\
    \beta & (k<a<A)\\
    \beta(1+\beta x) & (0<A\leq k<a) 
    \end{array}
    \\\hline
\end{array}$$
\caption{Diagram illustration of Table \ref{QQWWEE}.}
\label{tab:Lmat}
\end{table}

\begin{figure}[ht]
    \centering
$$\def\C#1{\raisebox{0pc}[0.4pc][-0.1pc]{\makebox[0.58pc]{%
\makebox[0pc]{$\bigcirc$}%
\makebox[0pc]{${\scriptstyle #1}$}%
}}}
\def\o{\BPDfr{\put(0,0){\color{lightcyan}\rule{\unitlength}{\unitlength}}}}
\def\f{\BPDfr{\put(0,0){\color{lightcyan}\rule{\unitlength}{\unitlength}}\FF}}
\def\j{\BPDfr{\put(0,0){\color{lightcyan}\rule{\unitlength}{\unitlength}}\JJ}}
\def\b{\BPDfr{\put(0,0){\color{lightcyan}\rule{\unitlength}{\unitlength}}\FF\JJ}}
\def\Bb{\BPDfr{\put(0,0){\color{lightcyan}\rule{\unitlength}{\unitlength}}\FF\JJ}}\BPD{\M{}\M{5}\M{4}\M{}\M{3}\\
\o\I\I\f\J\M{}\\
\f\X\J\I\F\M{2}\\
\I\I\o\I\I\M{}\\
\I\I\F\X\Bb\M{1}\\
\I\I\I\I\I\M{}\\
\M{4}\M{5}\M{2}\M{3}\M{1}}\qquad 
\begin{array}{c}
\def\ww#1{\makebox[1.5pc]{-\mbox{\(\scriptstyle #1\)}-}\ar@{-}[u]\ar@{-}[d]}
\xymatrix@R=0.3pc@C=0pc{
&\C{0}&&\C{5}&&\C{4}&&\C{0}&&\C{3}&\\
\C{0}&
\ww{t_1\ominus y_1}&\C{0}&
\ww{t_2\ominus y_1}&\C{0}&
\ww{t_3\ominus y_1}&\C{0}&
\ww{t_4\ominus y_1}&\C{3}&
\ww{t_5\ominus y_1}&\C{0}\\
&\C{0}&&\C{5}&&\C{4}&&\C{3}&&\C{0}&\\
\C{0}&
\ww{t_1\ominus y_2}&\C{4}&
\ww{t_2\ominus y_2}&\C{4}&
\ww{t_3\ominus y_2}&\C{0}&
\ww{t_4\ominus y_2}&\C{0}&
\ww{t_5\ominus y_2}&\C{2}\\
&\C{4}&&\C{5}&&\C{0}&&\C{3}&&\C{2}&\\
\C{0}&
\ww{t_1\ominus y_3}&\C{0}&
\ww{t_2\ominus y_3}&\C{0}&
\ww{t_3\ominus y_3}&\C{0}&
\ww{t_4\ominus y_3}&\C{0}&
\ww{t_5\ominus y_3}&\C{0}\\
&\C{4}&&\C{5}&&\C{0}&&\C{3}&&\C{2}&\\
\C{0}&
\ww{t_1\ominus y_4}&\C{0}&
\ww{t_2\ominus y_4}&\C{0}&
\ww{t_3\ominus y_4}&\C{2}&
\ww{t_4\ominus y_4}&\C{2}&
\ww{t_5\ominus y_4}&\C{1}\\
&\C{4}&&\C{5}&&\C{2}&&\C{3}&&\C{1}&\\
\C{0}&
\ww{t_1\ominus y_5}&\C{0}&
\ww{t_2\ominus y_5}&\C{0}&
\ww{t_3\ominus y_4}&\C{0}&
\ww{t_4\ominus y_5}&\C{0}&
\ww{t_5\ominus y_4}&\C{0}\\
&\C{4}&&\C{5}&&\C{2}&&\C{3}&&\C{1}&}
\end{array}$$
    \caption{Correspondence between a pipe puzzle and an admissible state.}
    \label{fig:enter-label-YYY}
\end{figure}

The lattice model  $\mathrm{L}(u, v, w)$ we are considering is defined as the set of all admissible states ($\mathrm{L}(u, v, w)$ can be regarded as a colored lattice model if the labels $1, 2, \ldots, n$ are viewed as $n$ colors).  The weight $\mathrm{wt}(S)$ of a  state $S$ in $\mathrm{L}(u, v, w)$ is the product of all the weights of vertices with $x=t_j\ominus y_i$ in row $i$ and column $j$. The \emph{partition function} of $\mathrm{L}(u, v, w)$ is defined by
\begin{equation*}
Z_{u, v}^w(t,y)=\sum_{S\in \mathrm{L}(u,v,w)} \wt(S). 
\end{equation*}
That is,
\begin{equation*}\label{eq:6vexforduvw}
Z_{u, v}^w(t,y)= 
\begin{array}{c}
\def\objectstyle#1{\scriptstyle #1}
\xymatrix@=0.5pc{
& \CC{\theta_v^1}&&\CC{\theta_v^2} && \cdots && \CC{\theta_v^n}&\\
\C{0}& 
t_1\ominus y_1 \ar@{-}[r]\ar@{-}[l]\ar@{-}[u]\ar@{-}[d]
&\C{}& 
t_2\ominus y_1 \ar@{-}[r]\ar@{-}[l]\ar@{-}[u]\ar@{-}[d]
&\C{}& 
\cdots \ar@{-}[r]\ar@{-}[l]\ar@{-}[u]\ar@{-}[d]
&\C{}& 
t_n\ominus y_1 \ar@{-}[r]\ar@{-}[l]\ar@{-}[u]\ar@{-}[d]
& \CC{\kappa_u^1}\\
&\C{} &&\C{} && &&\C{} &\\
\C{0}& 
t_1\ominus y_2 \ar@{-}[r]\ar@{-}[l]\ar@{-}[u]\ar@{-}[d]
&\C{}&
t_2\ominus y_2 \ar@{-}[r]\ar@{-}[l]\ar@{-}[u]\ar@{-}[d]
&\C{}& 
\cdots \ar@{-}[r]\ar@{-}[l]\ar@{-}[u]\ar@{-}[d]
&\C{}& 
t_n\ominus y_2 \ar@{-}[r]\ar@{-}[l]\ar@{-}[u]\ar@{-}[d]
& \CC{\kappa_u^2}\\
&\C{} &&\C{} && &&\C{} &\\
\vdots & 
\vdots \ar@{-}[r]\ar@{-}[l]\ar@{-}[u]\ar@{-}[d]&& 
\vdots \ar@{-}[r]\ar@{-}[l]\ar@{-}[u]\ar@{-}[d]&& 
\vdots \ar@{-}[r]\ar@{-}[l]\ar@{-}[u]\ar@{-}[d]&& 
\vdots \ar@{-}[r]\ar@{-}[l]\ar@{-}[u]\ar@{-}[d]& \vdots\\
&\C{} &&\C{} && &&\C{} &\\
\C{0}& 
t_1\ominus y_n \ar@{-}[r]\ar@{-}[l]\ar@{-}[u]\ar@{-}[d]
&\C{}& 
t_2\ominus y_n \ar@{-}[r]\ar@{-}[l]\ar@{-}[u]\ar@{-}[d]
&\C{}& 
\cdots \ar@{-}[r]\ar@{-}[l]\ar@{-}[u]\ar@{-}[d]
&\C{}& 
t_n\ominus y_n \ar@{-}[r]\ar@{-}[l]\ar@{-}[u]\ar@{-}[d]
& \CC{\kappa_u^n}\\
&\CC{\eta_w^1}&&\CC{\eta_w^2}&&\cdots&&\CC{\eta_w^n}&}
\end{array}
\end{equation*}

\begin{Rmk}
    It can be checked directly  from Table \ref{tab:Lmat} that the weights of vertices (with $x=t_j\ominus y_i$ in row $i$ and column $j$)
are consistent with the weights of 
the corresponding tiles as defined  above   Theorem \ref{thm:mainTh}.  
\end{Rmk}

Collecting the above observations, we summarize the following facts.

\begin{Prop}
Let  $u,v \in S_n$ be permutations with separated descents   at position $k$.
Then, for $w\in S_n$,
\begin{itemize}
    \item[(1)] The set $L(u, v, w)$ of admissible states are in bijection with the set $\PP(u,v,w)$  of pipe puzzles.

    \item[(2)] We have 
    $$Z_{u, v}^w(t,y)=\sum_{\pi\in \PP(u,v,w)} \wt(\pi).$$
   That is, the partition function $Z_{u, v}^w(t,y)$ coincides with the right-hand side 
 of \eqref{PPKYC}. 
\end{itemize}
\end{Prop}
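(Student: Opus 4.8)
The plan is to make precise the correspondence already sketched in the paragraphs preceding the statement, and to split the verification into three essentially independent points: well-definedness of the map in each direction, compatibility with the pipe-puzzle restrictions, and equality of weights.

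First I would set up the two maps explicitly. Given an admissible state $S\in L(u,v,w)$, at each vertex the labels of the four adjacent half-edges realize exactly one of the eleven local configurations listed in the middle column of Table~\ref{QQWWEE}; replacing that vertex by the tile displayed in the last column, with each strand inheriting the label of the edge it meets, produces a tiling $\Phi(S)$ of the grid~\eqref{eq:boarduvw} by the tiles of~\eqref{eq:Grotile}. Conversely, given a pipe puzzle $\pi$, labelling each (half-)edge by the color of the pipe running through it, and by $0$ if none does, yields a labelling $\Psi(\pi)$ of all edges. From the shapes of the tiles in Table~\ref{tab:Lmat} it is immediate that $\Phi$ and $\Psi$ are mutually inverse wherever both are defined, so part~(1) amounts to checking that $\Phi$ sends admissible states to pipe puzzles and $\Psi$ sends pipe puzzles to admissible states.

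Next I would verify that $\Phi(S)$ is a genuine pipe puzzle, i.e.\ that it satisfies conditions~(1)--(2) in the definition of a tiling together with the restrictions~\eqref{eq:crossRes}, \eqref{eq:bumpResA}, \eqref{eq:bumpResB}. The global pipe structure is forced locally: inspection of Table~\ref{QQWWEE} shows that at every vertex each nonzero color occurs on exactly two of the four adjacent edges, and those two edges are joined by a strand of the associated tile. Since the boundary labelling of~\eqref{eq:boarduvw} has each color $1,\dots,n$ appearing exactly twice on the boundary --- once on the right side if it is $\le k$ or on the top side if it is $>k$ (this is where the separated-descent hypothesis enters, through the formulas for $\kappa_u^i$ and $\theta_v^i$), and once on the bottom --- each color traces out a single pipe from its incoming to its outgoing boundary half-edge, with no stray closed loops possible (loops would require a color outside $\{0,1,\dots,n\}$). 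This gives exactly the $n$ pipes with the prescribed labels and endpoints. The three restrictions are then read directly off the inequalities: for the crossing tile, the condition $0<E=W<N=S$ of Table~\ref{QQWWEE} is precisely~\eqref{eq:crossRes} (the horizontal strand carries the smaller color); for the bumping tile, the three cases $E=S<N=W\le k$, $\ k<E=S<N=W$, and $\ N=W\le k<E=S$ correspond respectively to the two sub-cases of~\eqref{eq:bumpResA} (both pipes coming from the right side, or both from the top side, the northwest strand being the larger) and to~\eqref{eq:bumpResB} (the northwest strand being the right-side, hence smaller-labelled, pipe), the last also matching the weight $\beta(1+\beta x)$. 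Running the symmetric check shows $\Psi(\pi)\in L(u,v,w)$, completing part~(1).

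For part~(2) I would match weights tile by tile. The vertex of $S$ in row $i$ and column $j$ carries spectral parameter $x=t_j\ominus y_i$; by the Remark following Table~\ref{tab:Lmat}, the weight assigned to it by Table~\ref{QQWWEE} equals the factor attached by the rules above Theorem~\ref{thm:mainTh} to the corresponding tile of $\Phi(S)$ at position $(i,j)$. Taking the product over all $n^2$ vertices gives $\wt(S)=\wt(\Phi(S))$, and summing over $S\in L(u,v,w)$ using the bijection from part~(1) yields $Z_{u,v}^w(t,y)=\sum_{\pi\in\PP(u,v,w)}\wt(\pi)$, the right-hand side of~\eqref{PPKYC}. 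I do not expect a serious obstacle here: the argument is an elementary but careful dictionary between the lattice-model and pipe-puzzle formalisms, already exhibited on an example in Figure~\ref{fig:enter-label-YYY}. The only points that genuinely need attention are (a) that local admissibility at each vertex forces the correct global pipe count and connectivity with no leftover cycles, and (b) that the threshold-$k$ inequalities of Table~\ref{QQWWEE} encode exactly the geometric restrictions~\eqref{eq:crossRes}--\eqref{eq:bumpResB}; both rely on the separated-descent assumption, which guarantees that the nonzero right-boundary labels are $\{1,\dots,k\}$ and the nonzero top-boundary labels are $\{k+1,\dots,n\}$.
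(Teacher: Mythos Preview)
Your proposal is correct and follows exactly the approach the paper takes: the paper treats this proposition as a summary of the observations immediately preceding it (``Collecting the above observations, we summarize the following facts''), relying on the dictionary in Tables~\ref{QQWWEE} and~\ref{tab:Lmat} together with the Remark on weight compatibility, and your write-up simply makes that dictionary explicit and checks the restrictions \eqref{eq:crossRes}--\eqref{eq:bumpResB} case by case. There is no additional idea in the paper's treatment that you are missing.
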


We next introduce  two types of $R$-matrices: $R_{\mathrm{row}}$ and  $R_{\mathrm{col}}$, and check that the lattice model satisfies the Yang--Baxter equation when attaching an  $R_{\mathrm{row}}$ (resp., an $R_{\mathrm{col}}$)
 to rows (resp., columns). 

\subsection{The $R$-matrix $R_{\mathrm{row}}$}

The  $R$-matrix  $R_{\mathrm{row}}$ is given in 
 Table \ref{tab:Rmat}. 

\begin{table}[hhhhh]
$$\begin{array}{c|c}\hline
A & B_1\\\hline
\RmatB{0}{0}{0}{0}
\RmatB{a}{a}{a}{a} & 
\RmatB{b}{0}{b}{0}
\RmatB{0}{c}{0}{c}
\RmatB{a}{A}{a}{A}
\\\hline\vphantom{\dfrac{1}{2}}
1\quad (0<a) &
1 \quad (0<b\leq k<c \text{ and }0<a<A)
\\\hline
\hline
C & B_2 \\\hline
\RmatB{0}{a}{a}{0}
\RmatB{A}{a}{a}{A}& 
\RmatB{0}{b}{0}{b}
\RmatB{c}{0}{c}{0}
\RmatB{A}{a}{A}{a}
\\\hline\vphantom{\dfrac{1}{2}}
x\quad (0<a<A)
& 1+\beta x\quad (0<b\leq k<c \text{ and }0<a<A)
\\\hline
\end{array}$$
    \caption{The $R$-matrix $R_{\mathrm{row}}$.}
    \label{tab:Rmat}
\end{table}

\begin{Th}[Yang--Baxter Equation for $R_{\mathrm{row}}$]\label{thm:YBE-1}
For the  $R$-matrix $R_{\mathrm{row}}$, the partition functions of the following two models  are equal for any given boundary condition with $a_1, a_2, a_3, b_1,$ $
b_2, b_3\in \{0, 1,2, \ldots, n\}.$ 
\begin{align}\label{eq:YBEforR}
\begin{array}{c}
\xymatrix@=0.5pc{
    &&& \C{a_1} &&\\
    \C{b_1} &&
    \C{}&x
    \ar@{-}[u]\ar@{-}[d]
    \ar@{-}[l]\ar@{-}[r]
    &\C{a_2}\\
    & \makebox[1pc][c]{\mbox{$x\ominus y$}}
    \ar@{-}@/_/[ul]\ar@{-}@/^/[ur]
    \ar@{-}@/_/[dr]\ar@{-}@/^/[dl]&&\C{} \\
    \C{b_2} & 
    &\C{}&y
    \ar@{-}[u]\ar@{-}[d]
    \ar@{-}[l]\ar@{-}[r]&
    \C{a_3}\\
    &&& \C{b_3}
    }
\end{array}
=\begin{array}{c}
    \xymatrix@=0.5pc{
    & \C{a_1} &&\\
    \C{b_1} &y
    \ar@{-}[u]\ar@{-}[d]
    \ar@{-}[l]\ar@{-}[r]&
    \C{}&&
    \C{a_2}\\
    & \C{} && 
    \makebox[1pc][c]{\mbox{$x\ominus y$}}
    \ar@{-}@/_/[ul]\ar@{-}@/^/[ur]
    \ar@{-}@/_/[dr]\ar@{-}@/^/[dl]\\
    \C{b_2} & x 
    \ar@{-}[u]\ar@{-}[d]
    \ar@{-}[l]\ar@{-}[r]&
    \C{}&&
    \C{a_3}\\
    & \C{b_3}
    }
\end{array}
\end{align}
Here, the partition function of the left (resp., right) model is the sum of all weights of admissible configurations of the left (resp., right) diagram with the given boundary condition.  
\end{Th}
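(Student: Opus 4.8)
The plan is to verify the Yang--Baxter equation~\eqref{eq:YBEforR} by a finite case check, organized so that the combinatorial explosion is kept under control. The statement is purely local: both sides depend only on the six boundary labels $a_1,a_2,a_3,b_1,b_2,b_3$ together with the spectral parameters, and a summation over the labels of the three internal edges. So the proof reduces to checking, for every choice of the six boundary labels, that the two internal sums agree as rational functions in $t,y$ (recall a vertex in row~$i$, column~$j$ carries $x=t_j\ominus y_i$, and the $R$-vertex carries $x\ominus y$). I would first record the conservation laws forced by Table~\ref{QQWWEE} and Table~\ref{tab:Rmat}: every vertex and every $R$-vertex preserves the multiset of nonzero labels passing through it, so the multiset $\{a_1,a_2,a_3\}$ of outgoing labels must equal the multiset $\{b_1,b_2,b_3\}$ of incoming labels in both diagrams; otherwise both partition functions are~$0$ and there is nothing to prove. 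This immediately cuts the check down to the cases where $(a_1,a_2,a_3)$ is a permutation of $(b_1,b_2,b_3)$.

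Next I would stratify by how many of the three labels are zero and by their position relative to the threshold~$k$. If all three are $0$, both sides are a single configuration (all edges $0$) and the weights are $x\cdot y\cdot(x\ominus y)$ on each side after using $(x\ominus y)$ for the crossing $R$-vertex, which match trivially. If exactly one label, say $a$, is nonzero, the single colored pipe routes through the hexagon in the unique order-preserving way on each side; the remaining vertices are empty or elbow/straight vertices, and one checks the product of weights agrees --- here the two cases $0<a\le k$ and $k<a$ must be separated because the elbow weights $1+\beta x$ versus $1$ (and the $R_{\mathrm{row}}$ weight $1+\beta x$ in the $B_2$ family) depend on this, and the identity to verify becomes a short rational-function identity in $t_j\ominus y_i$ using the definition of $\ominus$. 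The genuinely substantive cases are two distinct nonzero labels $a<A$: now the two pipes may cross or bump, the internal edges can carry either label or a merged configuration, and one must sum over the (few, typically two or three) admissible internal states on each side. The three subcases $0<a<A\le k$, $k<a<A$, and $0<a\le k<A$ behave differently because the bumping vertex weight is $\beta$, $\beta$, and $\beta(1+\beta x)$ respectively, and the $R_{\mathrm{row}}$ table's $B_1,B_2$ entries only fire in the ``split'' regime $b\le k<c$; this is exactly where the design of $R_{\mathrm{row}}$ (with its asymmetric $1$ versus $1+\beta x$ weights) is forced, and where the YBE is nontrivial. Finally the case of three distinct nonzero labels forces all internal edges to carry single distinct colors, no bumping is possible, and the identity is the classical (colored) six-vertex YBE, which I would either check directly by the standard $3!$-term comparison or cite from the uncolored theory.

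The main obstacle will be the mixed threshold case $0<a\le k<A$ together with a zero label present, i.e.\ configurations where a ``small'' pipe and a ``large'' pipe interact via a bumping vertex weighted $\beta(1+\beta x)$ and simultaneously the $R$-vertex can be in its $B_1$ or $B_2$ state. Here both diagrams have more than one admissible internal state, the weights involve products of $(1+\beta(t_j\ominus y_i))$ factors at the two lattice vertices and the $R$-vertex, and one needs the algebraic identity $ (t_j\ominus y_i)\oplus\text{(something)}$-type simplifications coming from the cocycle property of $\ominus$, namely $(x\ominus z) = (x\ominus y)\oplus'(y\ominus z)$ in the appropriate sense, to see the two sides collapse to the same expression. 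I expect this to be the one subcase where a careful hand computation (or a short symbolic verification) is unavoidable; all the other subcases are either vacuous by conservation or reduce to one-line checks. Once every boundary case is dispatched, the equality of partition functions for arbitrary boundary conditions follows, since each partition function is by definition the sum over internal states of products of local weights and we have matched these state-sums case by case. To keep the write-up finite I would present the conservation-law reduction in full, then tabulate the nontrivial two-label and mixed-threshold verifications, relegating the routine one-pipe and three-distinct-pipe checks to a remark that they follow the same pattern.
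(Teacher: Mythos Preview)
Your approach is essentially the paper's: both reduce the Yang--Baxter equation to a finite case check by observing that the weights in Tables~\ref{tab:Lmat} and~\ref{tab:Rmat} depend only on which labels are $0$, which lie in $(0,k]$, which lie in $(k,n]$, and on their relative order. The paper is simply much terser---it notes one may take $a_1,\ldots,b_3\in\{0,\ldots,6\}$ with at most three distinct values and then defers the remaining finite check to computer verification---whereas you propose an explicit hand stratification.

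Two small corrections to your write-up. First, in the all-zero case the $R$-vertex is of type~$A$ with weight~$1$, not $x\ominus y$, so both sides equal $xy$ (still trivially matching). Second, in the three-distinct-nonzero case you cannot dismiss bumping out of hand: a bump tile $\BPD{\B}$ at an $L$-vertex only requires two nonzero labels and is a priori admissible, and the $R$-vertex also has nontrivial $B_1/B_2/C$ states with two nonzero labels, so this case is not literally the uncolored six-vertex YBE and still needs its own short check. Also note the YBE is stated with abstract spectral parameters $x,y$ and $x\ominus y$; the identities you must verify are rational-function identities in $x,y,\beta$, and there is no need to specialize to $t_j\ominus y_i$ at this stage (that specialization is only used later, in applying the YBE to the global model). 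None of these points affects the validity of your overall plan.
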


\begin{proof}
Note that \eqref{eq:YBEforR} only depends on the relative values of $a_1, a_2, a_3, b_2, b_2, b_3$. By Tables \ref{tab:Lmat} and   \ref{tab:Rmat}, it suffices to  assume $a_1, a_2, a_3, b_2, b_2, b_3\in \{0, 1,2, \ldots, 6\}$ and $\#\{a_1, a_2, a_3, $ $b_2, b_2, b_3\}=3.$  So there are only finitely many cases to consider, which can be directly dealt with via computer verification.  
\end{proof} 

\begin{Eg}Take $k=3$. 
Let $(a_1,a_2,a_3,b_1,b_2,b_3)=(1,0,4,1,4,0)$. 
By Tables \ref{tab:Lmat} and \ref{tab:Rmat}, it can be checked that the admissible configurations of both sides are illustrated below.  
$$\begin{array}{c}
\xymatrix@=0.5pc{
    &&& \C{1} &\\
    \C{1} &&
    \C{1}&{x}
    \ar@{-}[u]\ar@{-}[d]
    \ar@{-}[l]\ar@{-}[r]
    &\C{0}\\
    & \makebox[1pc][c]{\mbox{$x\ominus y$}}
    \ar@{-}@/_/[ul]\ar@{-}@/^/[ur]
    \ar@{-}@/_/[dr]\ar@{-}@/^/[dl]&&\C{0} \\
    \C{4} & 
    &\C{4}&{y}
    \ar@{-}[u]\ar@{-}[d]
    \ar@{-}[l]\ar@{-}[r]&
    \C{4}\\
    &&& \C{0}
    }
\end{array}
=
\begin{array}{c}
    \xymatrix@=0.5pc{
    & \C{1} &&\\
    \C{1} &{y}
    \ar@{-}[u]\ar@{-}[d]
    \ar@{-}[l]\ar@{-}[r]&
    \C{4}&&
    \C{0}\\
    & \C{4} && 
     \makebox[1pc][c]{\mbox{$x\ominus y$}}
    \ar@{-}@/_/[ul]\ar@{-}@/^/[ur]
    \ar@{-}@/_/[dr]\ar@{-}@/^/[dl]\\
    \C{4} & {x} 
    \ar@{-}[u]\ar@{-}[d]
    \ar@{-}[l]\ar@{-}[r]&
    \C{0}&&
    \C{4}\\
    & \C{0}}
\end{array}+
\begin{array}{c}
    \xymatrix@=0.5pc{
    & \C{1} &&\\
    \C{1} &{y}
    \ar@{-}[u]\ar@{-}[d]
    \ar@{-}[l]\ar@{-}[r]&
    \C{0}&&
    \C{0}\\
    & \C{0} && 
     \makebox[1pc][c]{\mbox{$x\ominus y$}}
    \ar@{-}@/_/[ul]\ar@{-}@/^/[ur]
    \ar@{-}@/_/[dr]\ar@{-}@/^/[dl]\\
    \C{4} & {x} 
    \ar@{-}[u]\ar@{-}[d]
    \ar@{-}[l]\ar@{-}[r]&
    \C{4}&&
    \C{4}\\
    & \C{0}
    }
\end{array}
 $$
Again, in view of Tables \ref{tab:Lmat} and \ref{tab:Rmat}, the partition function of the left model is $1+\beta x $, while the partition function of the right model 
is 
$\beta (1+\beta y)(x\ominus y)
+(1+\beta y)$.  These two partition functions are indeed the same. This agrees to the assertion in \eqref{eq:YBEforR}.
\end{Eg}
  
\subsection{The $R$-matrix $R_{\mathrm{col}}$}

The $R$-matrix $R_{\mathrm{col}}$ is given  in Table \ref{tab:rmat}. 
\begin{table}[hhhhh]
$$\begin{array}{c|c}\hline
A & B_1\\\hline
\RmatC{0}{0}{0}{0}
\RmatC{a}{a}{a}{a} & 
\RmatC{b}{0}{b}{0}
\RmatC{0}{c}{0}{c}
\RmatC{a}{A}{a}{A}
\\\hline\vphantom{\dfrac{1}{2}}
1\quad (0<a) &
1 \quad (0<b\leq k<c \text{ and }0<a<A)
\\\hline
\hline
C & B_2 \\\hline
\RmatC{a}{0}{0}{a}
\RmatC{A}{a}{a}{A}& 
\RmatC{0}{b}{0}{b}
\RmatC{c}{0}{c}{0}
\RmatC{A}{a}{A}{a}
\\\hline\vphantom{\dfrac{1}{2}}
x\quad (0<a<A)
& 1+\beta x \quad (0<b\leq k<c \text{ and }0<a<A)
\\\hline
\end{array}$$
    \caption{The $R$-matrix $R_{\mathrm{col}}$.}
    \label{tab:rmat}
\end{table}

\begin{Th}[Yang--Baxter Equation for $R_{\mathrm{col}}$]\label{thm:YBE-2}
For the $R$-matrix $R_{\mathrm{col}}$, the partition functions of the following two models  are equal for any given boundary condition with $a_1, a_2,$ $ a_3, b_2, b_2, b_3\in \{0, 1,2, \ldots, n\}.$ 

\begin{equation}\label{eq:YBEforr}
\begin{array}{c}
\xymatrix@=0.5pc{
    &\C{a_1}&&\C{a_2}\\
    && \phantom{\rule{1.75pc}{1pc}}
    \ar@{}[]|{\makebox[1pc][c]{\mbox{$x\ominus y$}}}
    \ar@{-}@/_/[ur]\ar@{-}@/^/[ul]
    \ar@{-}@/_/[dl]\ar@{-}@/^/[dr]\\
    &\C{} && \C{}\\
    \C{b_1}& x
        \ar@{-}[u]\ar@{-}[d]
        \ar@{-}[l]\ar@{-}[r]
    &\C{}& y
        \ar@{-}[u]\ar@{-}[d]
        \ar@{-}[l]\ar@{-}[r]
    & \C{a_3}\\
    &\C{b_2}&&\C{b_3}
    }
\end{array}
=
\begin{array}{c}
\xymatrix@=0.5pc{
    &\C{a_1} && \C{a_2}\\
    \C{b_1}& y
        \ar@{-}[u]\ar@{-}[d]
        \ar@{-}[l]\ar@{-}[r]
    &\C{}&x
        \ar@{-}[u]\ar@{-}[d]
        \ar@{-}[l]\ar@{-}[r]
    & \C{a_3}\\
    &\C{}&&\C{}\\
    && \phantom{\rule{1.75pc}{1pc}}
    \ar@{}[]|{\makebox[1pc][c]{\mbox{$x\ominus y$}}}
    \ar@{-}@/_/[ur]\ar@{-}@/^/[ul]
    \ar@{-}@/_/[dl]\ar@{-}@/^/[dr]\\
    &\C{b_2}&&\C{b_3}
    }
\end{array}
\end{equation}
\end{Th}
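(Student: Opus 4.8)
The plan is to prove Theorem \ref{thm:YBE-2} in exactly the way Theorem \ref{thm:YBE-1} was proved: reduce \eqref{eq:YBEforr} to a finite list of boundary conditions and verify each one by a direct computation with the weight tables. No genuinely new idea is needed once the lattice model and the $R$-matrix $R_{\mathrm{col}}$ are in hand.

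First I would record the structural facts that make the reduction possible. In both Table \ref{tab:Lmat} and Table \ref{tab:rmat}, the weight attached to a vertex depends on the incident edge labels only through the \emph{combinatorial type} of those labels, namely which of them are $0$, their relative order, and, for the nonzero ones, whether each lies in $\{1,\dots,k\}$ or in $\{k+1,\dots,n\}$. Moreover every vertex — both of the model and of $R_{\mathrm{col}}$ — merely permutes the multiset of labels on its four incident edges, so the incoming multiset of the whole hexagon in \eqref{eq:YBEforr} equals its outgoing multiset; hence at most three distinct nonzero labels can occur (one per line of the hexagon), and no internal edge carries a label absent from the boundary. Consequently both partition functions in \eqref{eq:YBEforr} depend only on the combinatorial type of $(a_1,a_2,a_3,b_1,b_2,b_3)$, and it suffices to verify \eqref{eq:YBEforr} for one representative of each type, say over the alphabet $\{0,1,\dots,6\}$ with $k=3$ and with at most three distinct nonzero values among the six boundary labels. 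This is a finite list of boundary conditions; for each of them one enumerates the few admissible internal fillings of the two sides using Tables \ref{tab:Lmat} and \ref{tab:rmat}, forms the two sums of products of weights (taking the $R_{\mathrm{col}}$-vertex to carry spectral parameter $x\ominus y$ and the two model vertices to carry $x$ and $y$), and checks that the resulting two expressions in $\beta$ and $x\ominus y$ coincide, exactly as in the proof of Theorem \ref{thm:YBE-1}. The check is mechanical and best delegated to a computer.

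Alternatively, Theorem \ref{thm:YBE-2} should follow from Theorem \ref{thm:YBE-1} by symmetry: the table for $R_{\mathrm{col}}$ is the mirror image of the table for $R_{\mathrm{row}}$, and the bulk weights of Table \ref{tab:Lmat} are invariant under the reflection of a vertex across its main diagonal combined with the relabeling $\ell\mapsto n+1-\ell$, which interchanges the two halves $\{1,\dots,k\}$ and $\{k+1,\dots,n\}$ (equivalently, replaces the threshold $k$ by $n-k$); this reflection swaps the two elbow types, swaps the horizontal and vertical strands of a crossing, and — this is the point one must verify case by case — preserves the weight assigned by Table \ref{tab:Lmat}. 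Applying the same operation to the hexagons of \eqref{eq:YBEforr} turns them into a relabeled copy of the hexagons of \eqref{eq:YBEforR}. Either way, I do not expect a real obstacle; the only delicate point is the bookkeeping in the reduction — arguing that no more than three colors can occur and that the small alphabet with $k=3$ already realizes every combinatorial type (or, on the alternative route, that the reflection-plus-relabeling genuinely fixes Table \ref{tab:Lmat} and carries Table \ref{tab:rmat} to Table \ref{tab:Rmat}), each of which is itself a short finite verification.
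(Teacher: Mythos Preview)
Your primary approach is exactly the paper's: the paper gives no separate proof of Theorem \ref{thm:YBE-2} and tacitly relies on the same finite computer verification used for Theorem \ref{thm:YBE-1}, which is precisely the reduction you spell out. Your alternative symmetry argument (anti-diagonal reflection together with $\ell\mapsto n+1-\ell$ and $k\mapsto n-k$) is a correct extra route that the paper does not mention.
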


\begin{Eg}
Still take  $k=3$, and $(a_1,a_2,a_3,b_1,b_2,b_3)=(1,0,4,1,4,0)$. 
Equality  \eqref{eq:YBEforr} tells
$$\beta(1+\beta x)=
\beta(1+\beta y)(1+\beta (x\ominus y)),
$$
as implied  by the following admissible  configurations
$$ 
\begin{array}{c}
\xymatrix@=0.5pc{
    &\C{1}&&\C{0}\\
    && \phantom{\rule{1.75pc}{1pc}}
    \ar@{}[]|{\makebox[1pc][c]{\mbox{$x\ominus y$}}}
    \ar@{-}@/_/[ur]\ar@{-}@/^/[ul]
    \ar@{-}@/_/[dl]\ar@{-}@/^/[dr]\\
    &\C{1} && \C{0}\\
    \C{1}& {x}
        \ar@{-}[u]\ar@{-}[d]
        \ar@{-}[l]\ar@{-}[r]
    &\C{4}& {y}
        \ar@{-}[u]\ar@{-}[d]
        \ar@{-}[l]\ar@{-}[r]
    & \C{4}\\
    &\C{4}&&\C{0}
    }
\end{array}
=
\begin{array}{c}
\xymatrix@=0.5pc{
    &\C{1} && \C{0}\\
    \C{1}& {y}
        \ar@{-}[u]\ar@{-}[d]
        \ar@{-}[l]\ar@{-}[r]
    &\C{4}& {x}
        \ar@{-}[u]\ar@{-}[d]
        \ar@{-}[l]\ar@{-}[r]
    & \C{4}\\
    &\C{4}&&\C{0}\\
    &&\phantom{\rule{1.75pc}{1pc}}
    \ar@{}[]|{\makebox[1pc][c]{\mbox{$x\ominus y$}}}
    \ar@{-}@/_/[ur]\ar@{-}@/^/[ul]
    \ar@{-}@/_/[dl]\ar@{-}@/^/[dr]\\
    &\C{4}&&\C{0}
    }
\end{array}$$
\end{Eg}

\section{Proof of the main result}\label{EFBG}

We always set $u, v, w\in S_n$ with $u, v$ owning   separated descents at position  $k$. 
We finish the proof of Theorem  \ref{thm:mainTh} by showing  that  $Z_{u, v}^w(t, y)$ satisfies the same recurrence relations 
(Propositions \ref{indonu} and  \ref{indonw}) and  initial condition (Proposition \ref{Prop:initial})  as $c_{u, v}^w(t, y)$.

\subsection{Induction on $u$}

Suppose that $s_iu<u$. It is easily checked that 
$\mathrm{maxdes}(s_iu)\leq k$. 
Recalling the definition in  \eqref{eq:boarduvw},  we see that
$0<\kappa_u^{i+1}<\kappa_u^{i}\leq k$ or 
$\kappa_u^{i}=0<\kappa_u^{i+1}\leq k$, depending on the positions where $i$ and 
$i+1$ lie. Clearly, $\kappa_{s_iu}$ is obtained from $\kappa_{u}$ by interchanging 
$\kappa_u^{i}$ and $\kappa_u^{i+1}$.
For example, for $n=7$ and  $k=3$,
we list a descending  chain as follows:
$$
\def\o#1{{\color{lightgray}#1}}
\begin{array}{c}
\ \ u=
543\o{1267\cdots}\,>\,
534\o{1267\cdots}\,>\,
524\o{1367\cdots}\,>\,
425\o{1367\cdots}\\
\kappa_u=
0032100\cdots\rightarrow 
0023100\cdots\rightarrow
\ 0203100\cdots\rightarrow
0201300\cdots
\end{array}
$$

\begin{Th}\label{thm:indonu}
If $s_iu<u$, then 
\begin{equation}\label{TREW}
    Z_{s_iu,v}^w=-\frac{1+\beta y_i}{y_i-y_{i+1}}Z_{u,v}^w
    +\frac{1+\beta y_{i+1}}{y_i-y_{i+1}}Z_{u, v}^w|_{y_i\leftrightarrow y_{i+1}}.
\end{equation} 
\end{Th}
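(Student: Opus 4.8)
The plan is to prove the row-recurrence for the partition function $Z_{u,v}^w$ by inserting an $R_{\mathrm{row}}$-vertex between rows $i$ and $i+1$ of the lattice model $\mathrm{L}(u,v,w)$ and using the Yang--Baxter equation (Theorem~\ref{thm:YBE-1}) to slide it from the left boundary to the right boundary. Concretely, I would attach an $R_{\mathrm{row}}$-vertex with spectral parameter $y_i\ominus y_{i+1}$ (so that the vertex weights in the two adjacent rows become $t_j\ominus y_i$ and $t_j\ominus y_{i+1}$ respectively, matching the model) at the extreme left of the two rows. On the left boundary both horizontal half-edges carry the label $0$, so the $R$-vertex is forced into its type-$A$ configuration with weight $1$; hence the augmented partition function equals $Z_{u,v}^w$. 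Then I repeatedly apply \eqref{eq:YBEforR} to move the $R$-vertex rightward through all $n$ columns without changing the partition function. At the right boundary the two horizontal half-edges carry the labels $\kappa_u^i$ and $\kappa_u^{i+1}$; as noted before the theorem, since $s_iu<u$ we are in one of the cases $0<\kappa_u^{i+1}<\kappa_u^i\le k$ or $\kappa_u^i=0<\kappa_u^{i+1}\le k$, i.e.\ exactly the situation $0<a<A$ (allowing $a=0$) covered by the $R_{\mathrm{row}}$ table.

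The heart of the argument is then to read off what the $R$-vertex does at the right boundary. In the generic case $0<\kappa_u^{i+1}<\kappa_u^i\le k$, the $R$-vertex can close up in two ways: the type-$A$ configuration (weight $1$), which leaves the incoming labels $(\kappa_u^i,\kappa_u^{i+1})$ unswapped and contributes a copy of the model with boundary $\kappa_u$, i.e.\ $Z_{u,v}^w$; and the type-$C$ configuration (weight $x = y_i\ominus y_{i+1}$), which swaps the two labels and contributes a copy with boundary $\kappa_{s_iu}$, i.e.\ $Z_{s_iu,v}^w$, but now evaluated with $y_i$ and $y_{i+1}$ interchanged in rows $i,i+1$ because sliding past the $R$-vertex exchanges the two row parameters. (One checks the degenerate case $\kappa_u^i=0<\kappa_u^{i+1}$ separately; there the type-$B_1,B_2,C$ configurations involving a color crossing the $k$-threshold are the relevant ones, and the same bookkeeping applies.) This yields an identity of the shape
\begin{equation*}
Z_{u,v}^w = Z_{u,v}^w\big|_{y_i\leftrightarrow y_{i+1}} + (y_i\ominus y_{i+1})\, Z_{s_iu,v}^w\big|_{y_i\leftrightarrow y_{i+1}},
\end{equation*}
possibly with extra $(1+\beta x)$ factors from the $B_2$-type weight that must be tracked carefully.

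Solving this for $Z_{s_iu,v}^w$ and unwinding the definition $x\ominus y=(x-y)/(1+\beta y)$ should reproduce exactly \eqref{TREW}; the factors $(1+\beta y_i)$ and $(1+\beta y_{i+1})$ in the statement are precisely what the $\ominus$'s and the $1+\beta x$ weights in Table~\ref{tab:Rmat} conspire to produce. The main obstacle I anticipate is the careful boundary analysis: one must enumerate all admissible closures of the $R$-vertex against the boundary data $(\kappa_u^i,\kappa_u^{i+1})$ — including the degenerate subcase where one label is $0$ and the subcases governed by whether the colors lie below or above the separation threshold $k$ — and verify that in each case the weight-bookkeeping (the $x$ versus $1+\beta x$ versus $\beta(1+\beta x)$ entries) collapses to the single clean coefficient displayed in \eqref{TREW}. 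A secondary point to check is that sliding the $R$-vertex all the way through is legitimate, i.e.\ that every intermediate configuration is admissible and that the internal edge labels never exceed $n$ or violate the color constraints; this follows from the structure of Table~\ref{tab:Lmat} and Table~\ref{tab:Rmat} but should be stated. Once \eqref{TREW} is established, the analogous arguments with $R_{\mathrm{col}}$ (Theorem~\ref{thm:YBE-2}) will handle the $w$-recurrence, and together with the initial condition (Proposition~\ref{Prop:initial}, matched against the explicit pipe puzzle for $u_0$) the proof of Theorem~\ref{thm:mainTh} is complete.
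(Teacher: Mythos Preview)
Your approach is the same as the paper's ``train argument'': attach an $R_{\mathrm{row}}$-vertex on one side of rows $i,i+1$, use Theorem~\ref{thm:YBE-1} to slide it across, and compare the two boundary evaluations. However, your bookkeeping is off in two places that matter.

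First, the spectral parameter: if you attach $R_{\mathrm{row}}$ on the left with the rows in their \emph{original} order (top row parameter $t_j\ominus y_i$, bottom $t_j\ominus y_{i+1}$), then the Yang--Baxter compatibility forces the $R$-parameter to be $(t_j\ominus y_i)\ominus(t_j\ominus y_{i+1})=y_{i+1}\ominus y_i$, not $y_i\ominus y_{i+1}$. The paper avoids this sign by first swapping the row parameters (so the model on the left side already carries $y_i\leftrightarrow y_{i+1}$) and then using $R$-parameter $y_i\ominus y_{i+1}$; either convention works, but you must be consistent.

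Second, the right-boundary closure is never of type $A$: since $s_iu<u$ we have $\kappa_u^i\neq\kappa_u^{i+1}$, so the ``labels unchanged'' closure is type $B_2$ (weight $1+\beta x$) and the ``labels swapped'' closure is type $C$ (weight $x$). Your displayed identity is therefore missing the $(1+\beta x)$ factor on the first term and has the wrong sign on $x$; the correct relation (in the paper's convention) is
\[
Z_{u,v}^w\big|_{y_i\leftrightarrow y_{i+1}}
=(1+\beta(y_i\ominus y_{i+1}))\,Z_{u,v}^w+(y_i\ominus y_{i+1})\,Z_{s_iu,v}^w,
\]
which unwinds to \eqref{TREW}. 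Your worries about the degenerate case $\kappa_u^i=0$ and the threshold $k$ are unnecessary: both subcases are covered by the same $B_2$ and $C$ entries of Table~\ref{tab:Rmat}, and since $\mathrm{maxdes}(u)\le k$ the nonzero $\kappa_u$ labels are automatically $\le k$.
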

\begin{proof} 
Consider the lattice model $L(u, v, w)$. 
We attach an $R_{\mathrm{row}}$ to the left boundary of row $i$ and row $i+1$ (Meanwhile, we make the variable exchange   $y_i\leftrightarrow y_{i+1}$ in the states of $L(u, v, w)$), as illustrated in  \eqref{eq:proofinduA}. 
\begin{equation}\label{eq:proofinduA}
    \begin{array}{c}
    \def\objectstyle#1{\scriptstyle #1}
    \xymatrix@=0.5pc{
    &&&\vdots&&\vdots&&\vdots&&\vdots\\
    \C{0}&&\C{}&
    t_1\ominus y_{i+1}
        \ar@{-}[r]\ar@{-}[l]
        \ar@{-}[u]\ar@{-}[d]
    &\C{}&
    t_2\ominus y_{i+1}
        \ar@{-}[r]\ar@{-}[l]
        \ar@{-}[u]\ar@{-}[d]
    &\C{}&
    \cdots
        \ar@{-}[r]\ar@{-}[l]
        \ar@{-}[u]\ar@{-}[d]
    &\C{}&
    t_n\ominus y_{i+1}
        \ar@{-}[r]\ar@{-}[l]
        \ar@{-}[u]\ar@{-}[d]
    &\CC{\kappa_{u}^{i}}
    \\
    & \makebox[1pc]{\mbox{$y_i\ominus y_{i+1}$}}
    \ar@{-}@/_/[ul]\ar@{-}@/^/[ur]
        \ar@{-}@/_/[dr]\ar@{-}@/^/[dl]&&\C{}&&\C{}&&&&\C{}&\\
    \C{0}&&\C{}&
    t_1\ominus y_i
        \ar@{-}[r]\ar@{-}[l]
        \ar@{-}[u]\ar@{-}[d]
    &\C{}&
    t_2\ominus y_i
        \ar@{-}[r]\ar@{-}[l]
        \ar@{-}[u]\ar@{-}[d]
    &\C{}&
    \cdots
        \ar@{-}[r]\ar@{-}[l]
        \ar@{-}[u]\ar@{-}[d]
    &\C{}&
    t_n\ominus y_{i}
        \ar@{-}[r]\ar@{-}[l]
        \ar@{-}[u]\ar@{-}[d]
    &\CC{\kappa_{u}^{i+1}}
    \\
    &&&\vdots&&\vdots&&\vdots&&\vdots
    }
    \end{array}
\end{equation}
By Table \ref{tab:Rmat}, there is exactly  one admissible configuration for the $R$-matrix $R_{\mathrm{row}}$ (from $A$ in Table \ref{tab:Rmat}).
So the partition function of  \eqref{eq:proofinduA} reads as 
\begin{equation}\label{SL-1}
    Z_{u, v}^w|_{y_i\leftrightarrow y_{i+1}}.
\end{equation}

Noticing that $(t_j\ominus y_{i+1})\ominus (t_j\ominus y_i)=y_i\ominus y_{i+1}$, we may apply repeatedly the Yang--Baxter equation in Theorem  \ref{thm:YBE-1} to \eqref{eq:proofinduA}, resulting in a model depicted in \eqref{eq:proofinduB},  with an $R$-matrix $R_{\mathrm{row}}$ attached on the right boundary. 
\begin{equation}\label{eq:proofinduB}
\begin{array}{c}
\def\objectstyle#1{\scriptstyle #1}
\xymatrix@=0.5pc{
&\vdots&&\vdots&&\vdots&&\vdots\\
\C{0}&
t_1\ominus y_{i}
    \ar@{-}[r]\ar@{-}[l]
    \ar@{-}[u]\ar@{-}[d]
&\C{}&
t_2\ominus y_{i}
    \ar@{-}[r]\ar@{-}[l]
    \ar@{-}[u]\ar@{-}[d]
&\C{}&
\cdots
    \ar@{-}[r]\ar@{-}[l]
    \ar@{-}[u]\ar@{-}[d]
&\C{}&
t_n\ominus y_{i}
    \ar@{-}[r]\ar@{-}[l]
    \ar@{-}[u]\ar@{-}[d]
&\C{}&&\CC{\kappa_{u}^{i}}
\\
&\C{}&&\C{}&&&&\C{}&& \makebox[1pc]{\mbox{$y_i\ominus y_{i+1}$}}
\ar@{-}@/_/[ul]\ar@{-}@/^/[ur]
    \ar@{-}@/_/[dr]\ar@{-}@/^/[dl]&\\
\C{0}&
t_1\ominus y_{i+1}
    \ar@{-}[r]\ar@{-}[l]
    \ar@{-}[u]\ar@{-}[d]
&\C{}&
t_2\ominus y_{i+1}
    \ar@{-}[r]\ar@{-}[l]
    \ar@{-}[u]\ar@{-}[d]
&\C{}&
\cdots
    \ar@{-}[r]\ar@{-}[l]
    \ar@{-}[u]\ar@{-}[d]
&\C{}&
t_n\ominus y_{i+1}
    \ar@{-}[r]\ar@{-}[l]
    \ar@{-}[u]\ar@{-}[d]
&\C{}&&\CC{\kappa_{u}^{i+1}}
\\
&\vdots&&\vdots&&\vdots&&\vdots
}
\end{array}
\end{equation}

Consider the partition function of \eqref{eq:proofinduB}.
Keep in mind that 
$0<\kappa_u^{i+1}<\kappa_u^{i}\leq k$ or 
$\kappa_u^{i}=0<\kappa_u^{i+1}\leq k$.
For each situation, there are two admissible configurations for the $R$-matrix $R_{\mathrm{row}}$ respectively from 
$B_2$ and $C$ in Table \ref{tab:Rmat},  corresponding  respectively to the models  $L(u, v, w)$ and  $L(s_iu, v, w)$.
Thus, the partition function of    \eqref{eq:proofinduB} is
\begin{equation}\label{SL-2}
   (1+\beta(y_i\ominus y_{i+1}))
Z_{u,v}^w
+(y_i\ominus y_{i+1})
Z_{s_iu,v}^w. 
\end{equation}
Equating \eqref{SL-1} and \eqref{SL-2}, 
we get the desired formula in \eqref{TREW}.
\end{proof}

\subsection{Induction on $w$}

We now establish the recurrence relation for $Z_{u, v}^w$, which is parallel to Proposition    \ref{indonw}.

\begin{Th}\label{thm:indonw}
If $s_iw>w$, then 
\begin{align}\label{REWQ}
Z_{u,v}^{s_iw} & 
=\begin{cases}
  -\dfrac{1+\beta t_{i+1}}{t_i-t_{i+1}}Z_{u,v}^w|_{t_i\leftrightarrow t_{i+1}}
+\dfrac{1+\beta t_{i}}{t_i-t_{i+1}}Z_{u,v}^w
+ Z_{u,s_iv}^w|_{t_i\leftrightarrow t_{i+1}}, 
& s_iv<v,\\[15pt]
  -\dfrac{1+\beta t_{i}}{t_i-t_{i+1}}Z_{u, v}^w|_{t_i\leftrightarrow t_{i+1}}
+\dfrac{1+\beta t_{i}}{t_i-t_{i+1}}Z_{u, v}^w,
& s_iv>v. 
\end{cases}
\end{align}
\end{Th}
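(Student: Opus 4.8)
The plan is to mimic the proof of Theorem~\ref{thm:indonu}, but now attaching an $R_{\mathrm{col}}$ to the \emph{top} boundary in the two columns $i$ and $i+1$ (simultaneously making the exchange $t_i\leftrightarrow t_{i+1}$ on the states). First I would observe that, since $s_iw>w$, the bottom labels satisfy $\eta_w^i<\eta_w^{i+1}$ in exactly the sense needed; the top labels $\theta_v^i,\theta_v^{i+1}$, however, depend on $v$: when $s_iv>v$ we have $0\le\theta_v^i<\theta_v^{i+1}$ (or both zero), while when $s_iv<v$ one of $\theta_v^i,\theta_v^{i+1}$ exceeds the other in the opposite way. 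This is exactly why the recurrence splits into two cases, matching the case split in Proposition~\ref{indonw}.

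The execution would go as follows. Attach $R_{\mathrm{col}}$ above columns $i,i+1$, feeding in the (swapped) boundary labels $\theta_v^{i+1},\theta_v^i$ at the top of the $R$-vertex. Since $(t_i\ominus y_j)\ominus(t_{i+1}\ominus y_j)=t_i\ominus t_{i+1}$ is independent of the row $j$, the Yang--Baxter equation of Theorem~\ref{thm:YBE-2} can be applied repeatedly to slide the $R_{\mathrm{col}}$ down through all $n$ rows to the bottom boundary. On the top side, before sliding, the $R$-vertex has a forced boundary: by Table~\ref{tab:rmat}, feeding in $\theta_v^{i+1}$ and $\theta_v^i$ there is a unique admissible $R$-configuration when $s_iv>v$ (type $A$, weight $1$), so the partition function of the initial picture is $Z_{u,v}^{w}|_{t_i\leftrightarrow t_{i+1}}$; when $s_iv<v$ the top $R$-vertex admits two configurations (types $A$ and $C$, weights $1$ and $t_i\ominus t_{i+1}$), giving $Z_{u,v}^{w}|_{t_i\leftrightarrow t_{i+1}} + (t_i\ominus t_{i+1})\,Z_{u,s_iv}^{w}|_{t_i\leftrightarrow t_{i+1}}$, because the $C$-configuration is precisely the one that interchanges the colors and hence realizes $s_iv$ on the top boundary. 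After sliding to the bottom, since $\eta_w^i<\eta_w^{i+1}$, the bottom $R$-vertex has two admissible configurations (from types $B_2$ and $C$ of Table~\ref{tab:rmat}), corresponding respectively to $L(u,v,s_iw)$ with weight $1+\beta(t_i\ominus t_{i+1})$ and to $L(u,v,w)$ with weight $t_i\ominus t_{i+1}$; hence the bottom partition function is $(1+\beta(t_i\ominus t_{i+1}))\,Z_{u,v}^{w} + (t_i\ominus t_{i+1})\,Z_{u,v}^{s_iw}$. Wait --- I should double-check the roles: the label that propagates \emph{unchanged} down column $i$ versus the one forced through the $C$-vertex determines whether it is $Z_{u,v}^{w}$ or $Z_{u,v}^{s_iw}$ that picks up the $1+\beta x$ factor; reading Table~\ref{tab:rmat} carefully settles this, and the outcome must be the displayed one so that equating the two partition functions and solving for $Z_{u,v}^{s_iw}$ yields \eqref{REWQ} after clearing the common factor and rewriting $t_i\ominus t_{i+1}=(t_i-t_{i+1})/(1+\beta t_{i+1})$, $1+\beta(t_i\ominus t_{i+1})=(1+\beta t_i)/(1+\beta t_{i+1})$.

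The main obstacle I anticipate is the bookkeeping at the top $R$-vertex in the case $s_iv<v$: one must check that the ``extra'' $C$-configuration there, after the $R_{\mathrm{col}}$ is slid all the way down, contributes \emph{exactly} a copy of $Z_{u,s_iv}^{w}|_{t_i\leftrightarrow t_{i+1}}$ and does not interfere with the color-matching at the bottom --- i.e.\ that the two slidings (the $A$-part and the $C$-part of the initial top vertex) can be analyzed independently and their bottom $R$-configurations recombined correctly. This requires being slightly careful about which colors $\theta_v^i,\theta_v^{i+1}$ actually are (one of them may be $0$ when the corresponding value of $v$ sits in the first $k$ positions), and checking that the hypotheses $0<b\le k<c$ appearing in Table~\ref{tab:rmat} are met by the relevant pair in each invocation of Theorem~\ref{thm:YBE-2}; the separated-descent assumption on $v$ is what guarantees this. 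Everything else is the same mechanical Yang--Baxter propagation as in Theorem~\ref{thm:indonu}.
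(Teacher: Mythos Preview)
Your strategy---attach an $R_{\mathrm{col}}$ at the top of columns $i,i+1$, slide it to the bottom via Theorem~\ref{thm:YBE-2}, and equate the two partition functions---is exactly the paper's. But the bookkeeping is off in a way that makes the argument fail as written.

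The paper does \emph{not} swap the top boundary labels, nor does it pre-swap $t_i\leftrightarrow t_{i+1}$ in the lattice. It simply places the $R_{\mathrm{col}}$ (with parameter $t_i\ominus t_{i+1}$) above the unchanged model $L(u,v,w)$; sliding it down then produces the model with swapped $t$-variables and the $R$-vertex at the bottom. Thus the $R$-at-top picture carries no $|_{t_i\leftrightarrow t_{i+1}}$, while the $R$-at-bottom picture does---the reverse of what you wrote. Your extra swap of the labels $\theta_v^i,\theta_v^{i+1}$ is what causes the subsequent type identifications to come out wrong.

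Concretely: at the bottom, with $(\eta_w^i,\eta_w^{i+1})$ and $\eta_w^i<\eta_w^{i+1}$, the admissible $R_{\mathrm{col}}$ configurations are $B_1$ (weight $1$) and $C$ (weight $t_i\ominus t_{i+1}$), not $B_2$ and $C$, giving
\[
Z_{u,v}^{w}\big|_{t_i\leftrightarrow t_{i+1}} + (t_i\ominus t_{i+1})\,Z_{u,v}^{s_iw}\big|_{t_i\leftrightarrow t_{i+1}}.
\]
At the top, with the \emph{unswapped} pair $(\theta_v^i,\theta_v^{i+1})$: when $s_iv<v$ one has $\theta_v^i>\theta_v^{i+1}$, so the configurations are $B_2$ and $C$, giving $(1+\beta(t_i\ominus t_{i+1}))Z_{u,v}^w + (t_i\ominus t_{i+1})Z_{u,s_iv}^w$; when $s_iv>v$ one has $\theta_v^i\le\theta_v^{i+1}$ and only $A$ or $B_1$ applies, giving $Z_{u,v}^w$. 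You had these two cases interchanged (one configuration versus two). Equating and then exchanging $t_i\leftrightarrow t_{i+1}$ yields~\eqref{REWQ}.

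The conceptual point you missed is that, unlike in Theorem~\ref{thm:indonu} where the left boundary labels are all~$0$ and force a unique $R$-filling, here neither end is forced: both the top and bottom $R$-vertices may admit two fillings, and one simply equates the two multi-term expansions. Trying to engineer a unique-configuration end by pre-swapping labels is unnecessary and is what led you astray.
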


\begin{proof}

This time we attach an  $R_{\mathrm{col}}$ to the top boundary of $L(u, v, w)$.
Applying the Yang--Baxter equation in Theorem \ref{thm:YBE-2}, we obtain equivalent models given in \eqref{eq:proofinduC}.
\begin{equation}\label{eq:proofinduC}
    \begin{array}{c}
    \def\objectstyle#1{\scriptstyle #1}
    \xymatrix@=0.5pc{
    &\CC{\theta_v^i}&&\CC{\theta_v^{i+1}}\\
    &&\phantom{\rule{0pc}{1pc}}
        \ar@{-}@/_/[ur]\ar@{-}@/^/[ul]
        \ar@{-}@/_/[dl]\ar@{-}@/^/[dr]
        \ar@{}[]|{\makebox[0pc]{\colorbox{white}{$t_i\ominus t_{i+1}$}}}\\
    &\C{}&&\C{}\\
    \cdots&
    t_{i}\ominus y_1
        \ar@{-}[r]\ar@{-}[l]
        \ar@{-}[u]\ar@{-}[d]
    &\C{}&
    t_{i+1}\ominus y_1
        \ar@{-}[r]\ar@{-}[l]
        \ar@{-}[u]\ar@{-}[d]
    &\cdots\\
    &\C{}&&\C{}\\
    \cdots&
    \vdots
        \ar@{-}[r]\ar@{-}[l]
        \ar@{-}[u]\ar@{-}[d]
    &&
    \vdots
        \ar@{-}[r]\ar@{-}[l]
        \ar@{-}[u]\ar@{-}[d]
    &\cdots\\
    &\C{}&&\C{}\\
    \cdots&
    t_{i}\ominus y_n
        \ar@{-}[r]\ar@{-}[l]
        \ar@{-}[u]\ar@{-}[d]
    &\C{}&
    t_{i+1}\ominus y_n
        \ar@{-}[r]\ar@{-}[l]
        \ar@{-}[u]\ar@{-}[d]
    &\cdots\\
    &\CC{\eta_w^{i}}&&\CC{\eta_w^{i+1}}\\
    }
    \end{array}
    = 
        \begin{array}{c}
    \def\objectstyle#1{\scriptstyle #1}
    \xymatrix@=0.5pc{
    &\CC{\theta_v^i}&&\CC{\theta_v^{i+1}}\\
    \cdots&
    t_{i+1}\ominus y_1
        \ar@{-}[r]\ar@{-}[l]
        \ar@{-}[u]\ar@{-}[d]
    &\C{}&
    t_{i}\ominus y_1
        \ar@{-}[r]\ar@{-}[l]
        \ar@{-}[u]\ar@{-}[d]
    &\cdots\\
    &\C{}&&\C{}\\
    \cdots&
    \vdots
        \ar@{-}[r]\ar@{-}[l]
        \ar@{-}[u]\ar@{-}[d]
    &&
    \vdots
        \ar@{-}[r]\ar@{-}[l]
        \ar@{-}[u]\ar@{-}[d]
    &\cdots\\
    &\C{}&&\C{}\\
    \cdots&
    t_{i+1}\ominus y_n
        \ar@{-}[r]\ar@{-}[l]
        \ar@{-}[u]\ar@{-}[d]
    &\C{}&
    t_{i}\ominus y_n
        \ar@{-}[r]\ar@{-}[l]
        \ar@{-}[u]\ar@{-}[d]
    &\cdots\\
    &\C{}&&\C{}\\
    &&\phantom{\rule{0pc}{1pc}}
        \ar@{-}@/_/[ur]\ar@{-}@/^/[ul]
        \ar@{-}@/_/[dl]\ar@{-}@/^/[dr]
        \ar@{}[]|{\makebox[0pc]{\colorbox{white}{$t_i\ominus t_{i+1}$}}}\\
    &\CC{\eta_w^{i}}&&\CC{\eta_w^{i+1}}\\
    }
    \end{array}
\end{equation}

We first consider the partition function of the right model in \eqref{eq:proofinduC}. The assumption  $s_iw>w$ implies    
$0<\eta_w^i<\eta_{w}^{i+1}$. 
Notice also that 
$\eta_{s_iw}$ is obtained from $\eta_w$ by interchanging $\eta_{w}^i$ and $\eta_w^{i+1}$. 
In view of  Table \ref{tab:rmat}, there  are two admissible configurations  for the  $R$-matrix $R_{\mathrm{col}}$ (one   is from  $B_1$ in Table \ref{tab:rmat}, and the other is from $C$ in Table \ref{tab:rmat}), corresponding respectively to the models 
$L(u, v, w)$ and $L(u, v, s_iw)$.
So the partition function of the right model in \eqref{eq:proofinduC} is 
\begin{equation}\label{PPYYTT}
 Z_{u, v}^{w}|_{t_i\leftrightarrow t_{i+1}}
+(t_i\ominus t_{i+1})Z_{u, v}^{s_iw}|_{t_i\leftrightarrow t_{i+1}}.   
\end{equation}

We next consider the   partition function of the left model in \eqref{eq:proofinduC}.
There are two cases.

Case 1. 
$s_iv<v$. In this case, notice that 
$k<\theta_u^{i+1}<\theta_u^{i}$ or 
$0=\theta_u^{i+1}<\theta_u^{i}$, and that
$\theta_{s_iv}$ is obtained from $\theta_{v}$ by interchanging 
$\theta_v^{i}$ and $\theta_v^{i+1}$. 
By Table \ref{tab:rmat}, for either $k<\theta_u^{i+1}<\theta_u^{i}$ or 
$0=\theta_u^{i+1}<\theta_u^{i}$, there are two choices for the configurations of $R_{\mathrm{col}}$ (one   is from $B_2$, and the other is from $C$), corresponding respectively to the models 
$L(u, v, w)$ and $L(u, s_iv, w)$.
So, the partition function of the left model in \eqref{eq:proofinduC} is 
\begin{equation}\label{PPYYTT-1}
    (1+\beta (t_i\ominus t_{i+1}))Z_{u, v}^{w}
+(t_i\ominus t_{i+1})Z_{u,s_iv}^{w}.
\end{equation}
Equating \eqref{PPYYTT} and \eqref{PPYYTT-1}, we deduce that 
\begin{align*}
Z_{u, v}^{s_iw}|_{t_i\leftrightarrow t_{i+1}}
& = 
\frac{1+\beta (t_i\ominus t_{i+1})}{t_i\ominus 
t_{i+1}}Z_{u, v}^{w}+Z_{u,s_iv}^{w}
-\frac{1}{t_i\ominus t_{i+1}}Z_{u, v}^{w}|_{t_i\leftrightarrow t_{i+1}}
\\[5pt]
& = 
\frac{1+\beta t_i}{t_i-t_{i+1}}Z_{u, v}^{w}
+Z_{u,s_iv}^{w}-\frac{1+\beta t_{i+1}}{t_i-t_{i+1}}Z_{u, v}^{w}|_{t_i\leftrightarrow t_{i+1}},
\end{align*}
which, after the variable exchange $t_i\leftrightarrow t_{i+1}$, becomes the first equality in \eqref{REWQ}.

Case 2.    $s_iv>v$. 
In this case, $i$ appears before $i+1$ in $v$. So we have $0=\theta_v^i=\theta_v^{i+1}$, or $0=\theta_v^i$ and $k<\theta_v^{i+1}$, or 
$k<\theta_v^i<\theta_v^{i+1}$.
By Table \ref{tab:rmat}, for each of these   situations,  there is exactly one admissible configuration (from $A$ or $B_1$) of $R_{\mathrm{col}}$, and  we see that the partition function of the left model in \eqref{eq:proofinduC} is precisely  equal to $  Z_{u,v}^{w}$. By equating with \eqref{PPYYTT}, we obtain that  
\begin{align*}
Z_{u, v}^{s_iw}|_{t_i\leftrightarrow t_{i+1}}
& = 
\frac{1}{t_i\ominus t_{i+1}}Z_{u, v}^{w}
-\frac{1}{t_i\ominus t_{i+1}}Z_{u, v}^{w}|_{t_i\leftrightarrow t_{i+1}}\\[5pt]
& = 
\frac{1+\beta t_{i+1}}{t_i-t_{i+1}}Z_{u, v}^{w}
-\frac{1+\beta t_{i+1}}{t_i-t_{i+1}}Z_{u, v}^{w}|_{t_i\leftrightarrow t_{i+1}}.
\end{align*}
After the variable exchange  $t_i\leftrightarrow t_{i+1}$ on both sides, we reach the second equality in \eqref{REWQ}.
\end{proof}

\subsection{Initial condition}

We finally verify the initial case for $u_0$ (as  defined in \eqref{eq:defu0}) and $w=\operatorname{id}$.

\begin{Th}\label{thm:initial}
For   $v\in S_n$, we have 
\begin{equation}\label{TGB}
    Z_{u_0, v}^{\operatorname{id}}
    = \begin{cases}\displaystyle
        \prod_{i=1}^k\prod_{j=1}^{n-i}(t_i\ominus y_j), & \text{if }v=\operatorname{id},\\
        0, & \text{otherwise}.
    \end{cases}
\end{equation}
\end{Th}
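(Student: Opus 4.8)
The plan is to prove \eqref{TGB} by induction on $k$, at each step deleting the first column and the last row of the grid in \eqref{eq:boarduvw}, which turns an instance with parameters $(n,k)$ into one with parameters $(n-1,k-1)$. In the base case $k=0$ one has $u_0=\operatorname{id}$, all $n$ pipes enter from the top, and since $w=\operatorname{id}$ each of them must run straight down its column; hence there is a single state when $v=\operatorname{id}$ and none otherwise, with weight the empty product $1$, which agrees with \eqref{TGB}.

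For the inductive step I would first read off from \eqref{eq:defu0} that $\kappa_{u_0}^i=0$ for $i\le n-k$ and $\kappa_{u_0}^i=n+1-i$ for $i>n-k$; in particular $\kappa_{u_0}^n=1$, so the pipe labelled $1$ enters from the right in the bottom row $n$, and by $w=\operatorname{id}$ it must leave through the bottom of column $1$. The structural observation driving everything is that, among the admissible elbows, $\BPD{\F}$ joins the south and east edges while $\BPD{\J}$ joins the north and west edges; consequently a pipe travelling westward can only continue west or turn south, never north. Thus in row $n$ the pipe $1$ runs straight west from column $n$ and turns down only at column $1$, freezing row $n$ to be $\BPD{\X}$ in columns $2,\dots,n$ (legal by \eqref{eq:crossRes}, the horizontal pipe there carrying the smallest label) and $\BPD{\F}$ in column $1$. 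Because the whole left boundary is $0$, the only tiles that can occur in column $1$ are $\BPD{\O},\BPD{\I},\BPD{\F}$, and a pipe entering such a tile from the north cannot sit in $\BPD{\F}$; tracing column $1$ upward from $(n,1)$ then forces it to be $\BPD{\F}$ at $(n,1)$ and $\BPD{\O}$ in the rows above, which contributes precisely $\prod_{i=1}^{n-1}(t_1\ominus y_i)$. If $\theta_v^1\neq 0$ then $\theta_v^1=v^{-1}(1)>k$, so the pipe labelled $\theta_v^1$ would enter column $1$ from the top, be trapped there, and thus be forced to exit at the bottom of column $1$ against $w=\operatorname{id}$; hence $\theta_v^1=0$, and combined with $\mathrm{mindes}(v)\ge k$ (that is, $v(1)<\cdots<v(k)$) this forces $v(1)=1$.

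Deleting column $1$ and row $n$ then gives a bijection between $\PP(u_0,v,\operatorname{id})$ for $(n,k)$ and $\PP(u_0',v',\operatorname{id})$ for $(n-1,k-1)$, where $u_0'\in S_{n-1}$ is the corresponding longest permutation with $\mathrm{maxdes}\le k-1$ and $v'(i)=v(i+1)-1$; I would verify that the inverse map (re-inserting the frozen column $1$ and row $n$, with all new tiles weighing $1$ except the $\BPD{\O}$ stack) does land back in $\PP(u_0,v,\operatorname{id})$, so that this is a genuine bijection, and that under it the weight acquires exactly the factor $\prod_{i=1}^{n-1}(t_1\ominus y_i)$ while the tiles inherited from the smaller grid keep their weights once its variables are specialized from $t_1,\dots,t_{n-1},y_1,\dots,y_{n-1}$ to $t_2,\dots,t_n,y_1,\dots,y_{n-1}$. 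Since $v'=\operatorname{id}$ precisely when $v=\operatorname{id}$, the inductive hypothesis yields $Z_{u_0,v}^{\operatorname{id}}=\prod_{i=1}^{n-1}(t_1\ominus y_i)\cdot\prod_{j=1}^{k-1}\prod_{i=1}^{n-1-j}(t_{j+1}\ominus y_i)=\prod_{j=1}^{k}\prod_{i=1}^{n-j}(t_j\ominus y_i)$ when $v=\operatorname{id}$ and $0$ otherwise, which is \eqref{TGB}.

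The real content, and the step I expect to be the main obstacle, is the rigidity in the second paragraph: showing that the bottom row and the first column are completely frozen — in particular that no bumping tile $\BPD{\B}$ ever appears — and that the deletion is a weight-preserving bijection rather than merely a map. This is the careful-bookkeeping analogue, inside the two-boundary model of Section \ref{MMMM}, of the fact that the dominant permutation $u_0$, whose Rothe diagram is the staircase $\{(i,j):i\le k,\ j\le n-i\}$, admits a unique bumpless pipe dream.
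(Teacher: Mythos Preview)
Your proposal is correct and follows a genuinely different route from the paper's proof. The paper argues directly: since the right boundary for $u_0$ has its nonzero labels $k,k-1,\dots,1$ confined to the bottom $k$ rows, one can check column by column from the right that the pipes labelled $k+1,\dots,n$ are forced to run straight down columns $k+1,\dots,n$; this immediately gives $Z_{u_0,v}^{\operatorname{id}}=0$ unless $v=\operatorname{id}$, and then the leftmost $k$ columns are seen to admit the single staircase filling shown in Figure~\ref{fig:enter-label-AB}. You instead peel off the pipes from the right one at a time: freezing the bottom row (pipe~$1$) and the first column, then reducing to the $(n-1,k-1)$ problem by deletion and relabelling. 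Your rigidity analysis of row $n$ and column $1$ is correct, and the bijection with the smaller instance is exactly as you describe---the only labels lost are $1$'s, which live entirely in the deleted row, so the $k$-dependent weight conditions transfer cleanly to the $(k-1)$-problem under the shift. What your approach buys is a more explicit and mechanical argument that avoids the paper's ``evidently''; what the paper's approach buys is brevity, dispatching all $n-k$ top pipes at once rather than iterating.

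One small point to tighten: your base case $k=0$ asserts without argument that each top pipe runs straight down. This is true, but it is not automatic---it needs either the column-by-column forcing from the right boundary (all zero when $k=0$) that you already deploy in the inductive step, or the monotonicity observation that pipes can only move south and west together with a counting argument. Since this is precisely the kind of rigidity you handle carefully elsewhere, you should spell it out here too.
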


\begin{proof}
Here, we go back to the pipe puzzle model $\PP(u_0, v, \operatorname{id})$ for the computation of $Z_{u_0, v}^{\operatorname{id}}$.
The boundary condition is illustrated in the left diagram in  Figure  \ref{fig:enter-label-AB}.
\begin{figure}[ht]
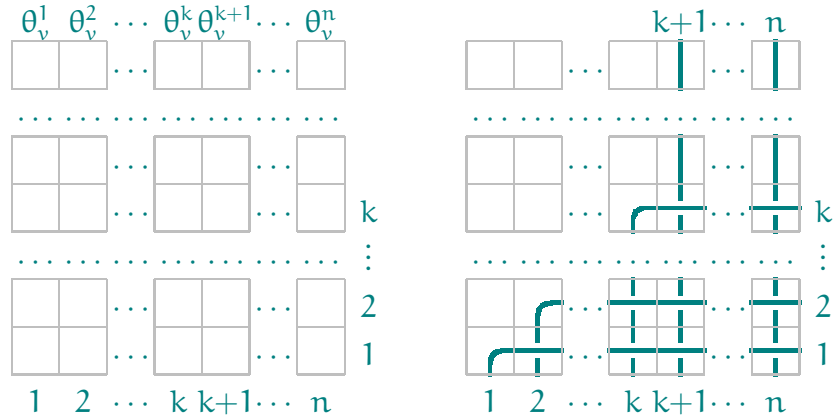

    \centering
$$\BPD[1.5pc]{
\M{\theta_v^1}\M{\theta_v^2}\M{\cdots}\M{\theta_v^k}\M{\theta_v^{k{+}1}}\M{\cdots}\M{\theta_v^n}\\
\+\+\M{\cdots}\+\+\M{\cdots}\+\M{}\\
\M{\cdots}\M{\cdots}\M{\cdots}
\M{\cdots}\M{\cdots}\M{\cdots}
\M{\cdots}\M{}\\
\+\+\M{\cdots}\+\+\M{\cdots}\+\M{}\\
\+\+\M{\cdots}\+\+\M{\cdots}\+\M{k}\\
\M{\cdots}\M{\cdots}\M{\cdots}\M{\cdots}\M{\cdots}\M{\cdots}\M{\cdots}\M{\vdots}\\
\+\+\M{\cdots}\+\+\M{\cdots}\+\M{2}\\
\+\+\M{\cdots}\+\+\M{\cdots}\+\M{1}\\
\M{1}\M{2}\M{\cdots}\M{k}\M{k{+}1}\M{\cdots}\M{n}
}\qquad
\BPD[1.5pc]{
\M{}\M{}\M{}\M{}\M{k{+}1}\M{\cdots}\M{n}\\
\O\O\M{\cdots}\O\I\M{\cdots}\I\\
\M{\cdots}\M{\cdots}\M{\cdots}\M{\cdots}
\M{\cdots}\M{\cdots}\M{\cdots}\\
\O\O\M{\cdots}\O\I\M{\cdots}\I\\
\O\O\M{\cdots}\F\X\M{\cdots}\X\M{k}\\
\M{\cdots}\M{\cdots}\M{\cdots}\M{\cdots}\M{\cdots}\M{\cdots}\M{\cdots}\M{\vdots}\\
\O\F\M{\cdots}\X\X\M{\cdots}\X\M{2}\\
\F\X\M{\cdots}\X\X\M{\cdots}\X\M{1}\\
\M{1}\M{2}\M{\cdots}\M{k}\M{k{+}1}\M{\cdots}\M{n}
}
$$   
    \caption{Boundary condition and the unique pipe puzzle in $\PP(u_0,\operatorname{id},\operatorname{id})$.}
    \label{fig:enter-label-AB}
\end{figure}
Evidently, the pipes labeled  $k+1,\ldots,n$ must go vertically from the top side down  to the bottom side.  So we have $Z_{u_0, v}^{\operatorname{id}}=0$ whenever $v\neq \operatorname{id}$. It remains to check the case    $v= \operatorname{id}$.
It is easily checked  that
there is exactly one pipe puzzle in $\PP(u_0,\operatorname{id},\operatorname{id})$, see the  right diagram of Figure \ref{fig:enter-label-AB}. This pipe puzzle contributes a weight as displayed in \eqref{TGB}.
\end{proof}

\section{Applications}\label{TGBF}

We list three main applications of  Theorem \ref{thm:mainTh}. The first application  is to recover the puzzle formula discovered   by Knutson and Zinn-Justin \cite[Theorem 1]{Pzz3}. 

\subsection{Separated-descent puzzles}

Consider  \eqref{eq:cuvwtoind} by setting $y=t$:
$$\G_{u}(x,t)\cdot \G_{v}(x,t)=\sum_w c_{u, v}^w(t,t)\cdot \G_{w}(x,t).$$
Assume that  $u, v\in S_n$ have separated descents at $k$. 
For $w\in S_n$, a pipe puzzle $\pi\in \PP(u,v,w)$ has weight zero if and only if $\pi$ has (at least) one empty  tile  $\BPD{\O}$ on the diagonal. This implies that $c_{u, v}^w(t,t)$   is a weighted counting of pipe puzzles $\pi\in \PP(u,v,w)$ such that $\pi$ has no empty tile on the diagonal. 
For such pipe puzzles, we have the following observation:
\begin{itemize}
    \item Each position  on the diagonal is tiled with  either $\BPD{\F}$ or $\BPD{\I}$, and each  position lying strictly to the southwest of the diagonal is tiled  with $\BPD{\I}$. 
\end{itemize}

This can be checked as follows. 
First,  the tile at the  position $(1,1)$   must be tiled with either  $\BPD{\F}$ or $\BPD{\I}$ since (1) the tile cannot be empty,  and (2) the labels on the left boundary are all $0$. 
    Therefore, all  positions below $(1,1)$ in the first column must be tiled with $\BPD{\I}$. The same analysis applies to the remaining positions $(2,2), \ldots, (n,n)$. 
   
Let  $\pi\in \PP(u,v,w)$ be a pipe puzzle without empty tile on the diagonal. 
Cut $\pi$  along its diagonal into two triangles, and denote by $P(\pi)$ the upper-right triangle.  
By the above observation, $\pi$ can be recovered from  $P(\pi)$. 
 To get the puzzle visualization of Knutson and Zinn-Justin \cite[Theorem 1]{Pzz3}, we rotate $P(\pi)$ counterclockwise by $45$ degrees, and then warp it into  an equilateral triangle.
{  If  further assuming that $u$ and $v$ are both $k$-Grassmannian, there is a direct bijection to the classical Grassmannian puzzles, see \cite[\textsection 5.1]{Pzz3} for more details.}
 
\newcommand{\R}[1][]{
\begin{picture}(1,1)%
    \linethickness{0.08\unitlength}
    \qbezier(0.5,0.5)(0.5,0.5)(1,0.5)
    \put(-0.3,-0.1){\makebox[\unitlength]{\(#1\)}}
    \thinlines
    \color{lightgray}%
    \put(1,0){\line(0,1){1}}%
    \put(0,1){\line(1,0){1}}%
    \put(1,0){\line(-1,1){1}}%
\end{picture}}
\newcommand{\U}[1][]{
\begin{picture}(1,1)%
    \linethickness{0.08\unitlength}
    \qbezier(0.5,0.5)(0.5,0.5)(0.5,1)
    \put(-0.3,-0.1){\makebox[\unitlength]{\(#1\)}}
    \thinlines
    \color{lightgray}%
    \put(1,0){\line(0,1){1}}%
    \put(0,1){\line(1,0){1}}%
    \put(1,0){\line(-1,1){1}}%
\end{picture}}


\begin{Eg}
Consider the pipe puzzles  in Example \ref{eg:pipepuzzle}. 
The following four puzzles survive after setting $y=t$.
$$\def\o{\BPDfr{\put(0,0){\color{lightcyan}\rule{\unitlength}{\unitlength}}}}
\def\f{\BPDfr{\put(0,0){\color{lightcyan}\rule{\unitlength}{\unitlength}}\FF}}
\def\j{\BPDfr{\put(0,0){\color{lightcyan}\rule{\unitlength}{\unitlength}}\JJ}}
\def\b{\BPDfr{\put(0,0){\color{lightcyan}\rule{\unitlength}{\unitlength}}\FF\JJ}}
\def\Bb{\BPDfr{\put(0,0){\color{lightcyan}\rule{\unitlength}{\unitlength}}\FF\JJ}}
\BPD{\M{}\M{5}\M{4}\M{}\M{3}\\
\f\X\J\o\I\M{}\\
\I\I\F\H\X\M{2}\\
\I\I\I\f\J\M{}\\
\I\I\I\I\F\M{1}\\
\I\I\I\I\I\M{}\\
\M{4}\M{5}\M{2}\M{3}\M{1}}\ \ 
\BPD{\M{}\M{5}\M{4}\M{}\M{3}\\
\f\X\J\f\J\M{}\\
\I\I\F\X\H\M{2}\\
\I\I\I\I\o\M{}\\
\I\I\I\I\F\M{1}\\
\I\I\I\I\I\M{}\\
\M{4}\M{5}\M{2}\M{3}\M{1}}\ \ 
\BPD{\M{}\M{5}\M{4}\M{}\M{3}\\
\f\X\J\f\J\M{}\\
\I\I\o\I\F\M{2}\\
\I\I\F\X\j\M{}\\
\I\I\I\I\F\M{1}\\
\I\I\I\I\I\M{}\\
\M{4}\M{5}\M{2}\M{3}\M{1}}\ \ 
\BPD{\M{}\M{5}\M{4}\M{}\M{3}\\
\f\X\J\o\I\M{}\\
\I\I\o\F\X\M{2}\\
\I\I\F\b\J\M{}\\
\I\I\I\I\F\M{1}\\
\I\I\I\I\I\M{}\\
\M{4}\M{5}\M{2}\M{3}\M{1}}
$$
Their  upper-right triangular regions  are
$$\def\o{\BPDfr{\put(0,0){\color{lightcyan}\rule{\unitlength}{\unitlength}}}}
\def\f{\BPDfr{\put(0,0){\color{lightcyan}\rule{\unitlength}{\unitlength}}\FF}}
\def\j{\BPDfr{\put(0,0){\color{lightcyan}\rule{\unitlength}{\unitlength}}\JJ}}
\def\b{\BPDfr{\put(0,0){\color{lightcyan}\rule{\unitlength}{\unitlength}}\FF\JJ}}
\def\Bb{\BPDfr{\put(0,0){\color{lightcyan}\rule{\unitlength}{\unitlength}}\FF\JJ}}
\BPD{\M{}\M{5}\M{4}\M{}\M{3}\\
\R[4]\X\J\o\I\M{}\\
\M{}\U[5]\F\H\X\M{2}\\
\M{}\M{}\U[2]\f\J\M{}\\
\M{}\M{}\M{}\U[3]\F\M{1}\\
\M{}\M{}\M{}\M{}\U[1]\M{}}\ \ 
\BPD{\M{}\M{5}\M{4}\M{}\M{3}\\
\R[4]\X\J\f\J\M{}\\
\M{}\U[5]\F\X\H\M{2}\\
\M{}\M{}\U[2]\I\o\M{}\\
\M{}\M{}\M{}\U[3]\F\M{1}\\
\M{}\M{}\M{}\M{}\U[1]\M{}}\ \ 
\BPD{\M{}\M{5}\M{4}\M{}\M{3}\\
\R[4]\X\J\f\J\M{}\\
\M{}\U[5]\o\I\F\M{2}\\
\M{}\M{}\R[2]\X\j\M{}\\
\M{}\M{}\M{}\U[3]\F\M{1}\\
\M{}\M{}\M{}\M{}\U[1]\M{}}\ \ 
\BPD{\M{}\M{5}\M{4}\M{}\M{3}\\
\R[4]\X\J\o\I\M{}\\
\M{}\U[5]\o\F\X\M{2}\\
\M{}\M{}\R[2]\b\J\M{}\\
\M{}\M{}\M{}\U[3]\F\M{1}\\
\M{}\M{}\M{}\M{}\U[1]\M{}}
$$
After rotation and warping,   the corresponding  puzzles are
$$
\def\rBPD#1{{\color{teal}\begin{array}{c}
{\scalebox{1}[1.73]{\rotatebox[origin=c]{45}{\(\BPD{#1}\)}}}
\\[-12.5pc]
3\quad\, \phantom{0}\\[0.15pc]
\phantom{0}\qquad\,\,\,\, 2\\[0.15pc]
4\qquad\qquad\, \phantom{0}\\[0.15pc]
5\quad\qquad\qquad\,\,\, 1\\[1.3pc]
4\quad 5\quad 2\quad 3\quad 1
\end{array}}}
\def\o{\BPDfr{\put(0,0){\color{lightcyan}\rule{\unitlength}{\unitlength}}}}
\def\f{\BPDfr{\put(0,0){\color{lightcyan}\rule{\unitlength}{\unitlength}}\FF}}
\def\j{\BPDfr{\put(0,0){\color{lightcyan}\rule{\unitlength}{\unitlength}}\JJ}}
\def\b{\BPDfr{\put(0,0){\color{lightcyan}\rule{\unitlength}{\unitlength}}\FF\JJ}}
\def\Bb{\BPDfr{\put(0,0){\color{lightcyan}\rule{\unitlength}{\unitlength}}\FF\JJ}}
\rBPD{
\R\X\J\o\I\\
\M{}\U\F\H\X\\
\M{}\M{}\U\f\J\\
\M{}\M{}\M{}\U\F\\
\M{}\M{}\M{}\M{}\U}
\rBPD{
\R\X\J\f\J\\
\M{}\U\F\X\H\\
\M{}\M{}\U\I\o\\
\M{}\M{}\M{}\U\F\\
\M{}\M{}\M{}\M{}\U}
\rBPD{
\R\X\J\f\J\\
\M{}\U\o\I\F\\
\M{}\M{}\R\X\j\\
\M{}\M{}\M{}\U\F\\
\M{}\M{}\M{}\M{}\U}
\rBPD{
\R\X\J\o\I\\
\M{}\U\o\F\X\\
\M{}\M{}\R\b\J\\
\M{}\M{}\M{}\U\F\\
\M{}\M{}\M{}\M{}\U}
$$

\end{Eg}

In the second application, we explain that  Theorem \ref{thm:mainTh} could be used to recover the bumpless pipe dream model of double Grothendieck polynomials by Weigandt \cite{Weigandt21}.

\subsection{Bumpless pipe dreams}

Let $k=n$ and $v=\operatorname{id}$. 
In this case, arbitrary $u\in S_n$ satisfies  the separated-descent  condition in \eqref{eq:sepdescent}. 
By Lemma \ref{Prop:cuvid=G}, 
$$c_{u,\operatorname{id}}^{\operatorname{id}}(t,y)=\G_u(t,y).$$
Let $\pi\in \PP(u,\operatorname{id},\operatorname{id})$. Then all pipes enter into $\pi$ from the right side. Apply the following operations to $\pi$:
\begin{itemize}

  \item reflecting $\pi$ across  the diagonal;
  
    \item replacing    $\kappa_u^i=u^{-1}(i)$  by $i$, and    $\eta_w^i=i$  by $u(i)$.

\end{itemize}
The resulting diagram is denoted as $B(\pi)$. 
Write
\[
\mathrm{BP}(u)=\{B(\pi)\colon \pi\in  \PP(u,\operatorname{id},\operatorname{id})\}.
\]
By the   restriction \eqref{eq:crossRes} on  $\BPD{\X}$  along with  the restriction \eqref{eq:bumpResA} on   $\BPD{\B}$, it can be checked that  for a diagram in  $\mathrm{BP}(u)$: (1) two pipes cross at most once, and (2) if two pipes have a  ``bumping''   $\BPD{\B}$ at position $(i,j)$, then they must cross at a position  to the northeast of $(i,j)$. 
This implies  that  the set $\mathrm{BP}(u)$
is precisely the set of bumpless pipe 
dreams of $u$, as  defined in 
\cite{Weigandt21}.

\begin{Rmk} 
As  bumpless pipe dreams in $\mathrm{BP}(u)$ are obtained from pipe puzzles  in $\PP(u,\operatorname{id},\operatorname{id})$ after a reflection, a tile at position $(i, j)$  is assigned a wight in the following way: 
\begin{enumerate}
    \item an empty tile $\BPD{\O}$  contributes $t_i\ominus y_j$; 
    
    \item  an elbow tile $\BPD{\J}$   contributes $1+\beta(t_i\ominus y_j)$;

    \item a bumping  tile  $\BPD{\B}$  contributes $\beta$;  

    \item any other  tile    contributes 1.
\end{enumerate} 
The weights     described above are slightly different from  the weights adopted  in \cite{Weigandt21}. It seems that when setting $\beta=0$,  the weights we   used imply more explicitly  the bumpless pipe dream model of double Schubert polynomials due to Lam, Lee and Shimozono \cite{LLS21}. 
\end{Rmk}

\begin{Eg}
Let $u=32514$. Below are pipe puzzles in  $\PP(u,\operatorname{id},\operatorname{id})$. 
$$
\def\o{\BPDfr{\put(0,0){\color{lightcyan}\rule{\unitlength}{\unitlength}}}}
\def\f{\BPDfr{\put(0,0){\color{lightcyan}\rule{\unitlength}{\unitlength}}\FF}}
\def\j{\BPDfr{\put(0,0){\color{lightcyan}\rule{\unitlength}{\unitlength}}\JJ}}
\def\b{\BPDfr{\put(0,0){\color{lightcyan}\rule{\unitlength}{\unitlength}}\FF\JJ}}
\def\Bb{\BPDfr{\put(0,0){\color{lightcyan}\rule{\unitlength}{\unitlength}}\FF\JJ}}
\BPD{
\o\o\o\F\H\M{4}\\
\o\F\H\X\H\M{2}\\
\F\X\H\X\H\M{1}\\
\I\I\o\I\F\M{5}\\
\I\I\F\X\X\M{3}\\
\M{1}\M{2}\M{3}\M{4}\M{5}}\ \ \ \ 
\BPD{
\o\o\o\F\H\M{4}\\
\o\o\F\X\H\M{2}\\
\F\H\X\X\H\M{1}\\
\I\F\j\I\F\M{5}\\
\I\I\F\X\X\M{3}\\
\M{1}\M{2}\M{3}\M{4}\M{5}}\ \ \ \ 
\BPD{
\o\o\o\F\H\M{4}\\
\o\F\H\X\H\M{2}\\
\o\I\F\X\H\M{1}\\
\F\X\j\I\F\M{5}\\
\I\I\F\X\X\M{3}\\
\M{1}\M{2}\M{3}\M{4}\M{5}}\ \ \ \ 
\BPD{
\o\o\o\F\H\M{4}\\
\o\o\F\X\H\M{2}\\
\o\F\Bb\X\H\M{1}\\
\F\X\j\I\F\M{5}\\
\I\I\F\X\X\M{3}\\
\M{1}\M{2}\M{3}\M{4}\M{5}}
$$
After reflection  and relabeling, the resulting bumpless pipe dreams of $u$ are
$$
\def\o{\BPDfr{\put(0,0){\color{lightcyan}\rule{\unitlength}{\unitlength}}}}
\def\f{\BPDfr{\put(0,0){\color{lightcyan}\rule{\unitlength}{\unitlength}}\FF}}
\def\j{\BPDfr{\put(0,0){\color{lightcyan}\rule{\unitlength}{\unitlength}}\JJ}}
\def\b{\BPDfr{\put(0,0){\color{lightcyan}\rule{\unitlength}{\unitlength}}\FF\JJ}}
\def\Bb{\BPDfr{\put(0,0){\color{lightcyan}\rule{\unitlength}{\unitlength}}\FF\JJ}}
\BPD{
\o\o\F\H\H\M{3}\\
\o\F\X\H\H\M{2}\\
\o\I\I\o\F\M{5}\\
\F\X\BX\H\BX\M{1}\\
\I\I\I\F\BX\M{4}\\
\M{1}\M{2}\M{3}\M{4}\M{5}}\ \ \ \ 
\BPD{
\o\o\F\H\H\M{3}\\
\o\o\I\F\H\M{2}\\
\o\F\X\j\F\M{5}\\
\F\X\X\H\X\M{1}\\
\I\I\I\F\BX\M{4}\\
\M{1}\M{2}\M{3}\M{4}\M{5}}\ \ \ \ 
\BPD{
\o\o\o\F\H\M{3}\\
\o\F\H\X\H\M{2}\\
\o\I\F\j\F\M{5}\\
\F\X\X\H\X\M{1}\\
\I\I\I\F\BX\M{4}\\
\M{1}\M{2}\M{3}\M{4}\M{5}}\ \ \ \ 
\BPD{
\o\o\o\F\H\M{3}\\
\o\o\F\X\H\M{2}\\
\o\F\b\j\F\M{5}\\
\F\X\X\H\X\M{1}\\
\I\I\I\F\BX\M{4}\\
\M{1}\M{2}\M{3}\M{4}\M{5}}
$$
\end{Eg}

We finally apply  Theorem \ref{HHUU}
to investigate a conjecture posed by  Kirillov \cite{Kirillov07}. 

\subsection{Kirillov's conjecture}\label{KiriCon}

Let us restrict to  Schubert polynomials. 
Setting  $\beta=0$, the operator $\pi_i$ is usually denoted as $\partial_i$:
$$\partial_i f = \frac{f-f|_{x_i\leftrightarrow x_{i+1}}}{x_i-x_{i+1}}.$$
The operator $\partial_i$ is also called the \emph{divided difference operator}. 
For $w\in S_\infty$, define $\partial_w=\partial_{i_1}\cdots \partial_{i_\ell}$ for  any reduced decomposition $w=s_{i_1}\cdots s_{i_{\ell}}$(this is well defined since the $\partial_i$'s satisfy the braid relations). 
It can be deduced  that \cite[Proposition 2]{Kirillov07}
\begin{equation}\label{OKNT}
    \partial_w \S_u(x, t)= \begin{cases}\displaystyle
        \S_{u{w^{-1}}}(x, t), &  \text{if }\ell(uw^{-1})=\ell(u)-\ell(w),\\[5pt]
        0, & \text{otherwise}.
    \end{cases}
\end{equation}
The \emph{skew operator} $\partial_{w/v}$  is characterized by  
\begin{equation}\label{eq:defofskewop}
\partial_w (fg) = \sum_v (\partial_{w/v}f)(\partial_v g).
\end{equation}
See  \cite[Definition 4]{Kirillov07} for a more concrete description  of  $\partial_{w/v}$.
Kirillov \cite[Conjecture 1]{Kirillov07} conjectured that for any $u, v, w$, the polynomial $\partial_{w/v}\S_u(x)$ has nonnegative integer coefficients:
$$\partial_{w/v}\S_u(x) \in \mathbb{Z}_{\geq 0}[x_1,x_2,\ldots].$$

Setting $y=0$ in \eqref{QQSSCC} yields that 
\begin{equation}\label{RFV}
  \S_{u}(x)\cdot \S_{v}(x,t)=\sum_{w} \bar{c}_{u,v}^w(t,0)\cdot \S_w(x,t).  
\end{equation}

\begin{Prop}\label{Coro:SxxSxt}
We have 
\begin{equation*} 
 \partial_{w/v}\S_{u}(x)=\bar{c}_{u, v}^w(x,0).   
\end{equation*}
\end{Prop}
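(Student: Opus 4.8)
The plan is to combine the defining property of the skew operator, $\partial_w(fg)=\sum_{v'}(\partial_{w/v'}f)(\partial_{v'}g)$ from \eqref{eq:defofskewop}, with the expansion \eqref{RFV} and the divided-difference action \eqref{OKNT} on double Schubert polynomials, and then to isolate the coefficient $\overline{c}_{u,v}^w(x,0)$ by specializing $x=t$ and invoking the localization $\S_\sigma(t,t)=\delta_{\sigma,\operatorname{id}}$ (the $\beta=0$ instance of \eqref{eq:locofG}).

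First I would fix $u,v,w$ and apply \eqref{eq:defofskewop} with $f=\S_u(x)\in\mathbb{Z}[x]$ and $g=\S_v(x,t)\in\mathbb{Z}[t][x]$, regarding the $t$-variables as constants (legitimate since each $\partial_i$ is $\mathbb{Z}[t]$-linear), to get
$$\partial_w\bigl(\S_u(x)\,\S_v(x,t)\bigr)=\sum_{v'}\bigl(\partial_{w/v'}\S_u(x)\bigr)\bigl(\partial_{v'}\S_v(x,t)\bigr).$$
On the right, \eqref{OKNT} gives $\partial_{v'}\S_v(x,t)=\S_{v(v')^{-1}}(x,t)$ when $\ell(v(v')^{-1})=\ell(v)-\ell(v')$ and $0$ otherwise; after putting $x=t$ and using $\S_{v(v')^{-1}}(t,t)=\delta_{v(v')^{-1},\operatorname{id}}$, every term dies except $v'=v$ (for which the length condition holds automatically), so the right side collapses to $\bigl(\partial_{w/v}\S_u\bigr)|_{x=t}$. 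On the left, I would instead substitute the expansion \eqref{RFV}, $\S_u(x)\S_v(x,t)=\sum_{w'}\overline{c}_{u,v}^{w'}(t,0)\,\S_{w'}(x,t)$, pull the $t$-polynomials $\overline{c}_{u,v}^{w'}(t,0)$ through $\partial_w$, apply \eqref{OKNT} again to obtain $\partial_w\S_{w'}(x,t)=\S_{w'w^{-1}}(x,t)$ (or $0$), and set $x=t$; localization leaves only $w'=w$, yielding $\overline{c}_{u,v}^w(t,0)$. Equating the two specialized expressions gives $\bigl(\partial_{w/v}\S_u\bigr)|_{x=t}=\overline{c}_{u,v}^w(t,0)$, and relabeling $t$ as $x$ finishes the argument.

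There is no serious obstacle here; the one thing to handle carefully is the bookkeeping of the two length conditions in \eqref{OKNT}, together with the remark that $\partial_{w/v}$ sends $\mathbb{Z}[x]$ to $\mathbb{Z}[x]$, so that $\bigl(\partial_{w/v}\S_u\bigr)|_{x=t}$ is genuinely the substitution $x_i\mapsto t_i$ in a polynomial in the $x$-variables and the final relabeling is legitimate. The localization $\S_\sigma(t,t)=\delta_{\sigma,\operatorname{id}}$ is precisely what forces a single surviving term on each side, which is what makes the specialization work.
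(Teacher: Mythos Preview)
Your proposal is correct and follows essentially the same approach as the paper: apply $\partial_w$ to both sides of \eqref{RFV}, expand the product side via the defining relation \eqref{eq:defofskewop} for $\partial_{w/v}$, use \eqref{OKNT} on each side, and specialize $x=t$ so that the localization \eqref{eq:locofG} kills all but one term on each side. The paper's proof is terser, but the logic and ingredients are identical to yours.
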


\begin{proof}
Apply  $\partial_w$ to both sides of \eqref{RFV},  and then take the specialization  $x=t$. In view of \eqref{OKNT},  \eqref{eq:defofskewop} and the localization formula in \eqref{eq:locofG} (which is still valid for double Schubert polynomials),  the left-hand side becomes $ \partial_{w/v}\S_{u}(x)$, and the right-hand side is left with $\bar{c}_{u, v}^w(x,0)$.
\end{proof}

Setting $y=0$ in Theorem \ref{HHUU}, we arrive at the following conclusion.

\begin{Coro}\label{Coro:SxxSxt-2}
Let  $u,v \in S_n$ be permutations with separated descents. Then
$$\S_u(x)\cdot\S_v(x,t)\in \sum_w \mathbb{Z}_{\geq 0}[t]\cdot\S_w(x,t).$$
\end{Coro}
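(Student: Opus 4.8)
The plan is to obtain this as a direct specialization of Theorem~\ref{HHUU} at $y=0$. First, recalling from the definitions that $\S_u(x,0)=\S_u(x)$, setting $y_1=y_2=\cdots=0$ in the expansion \eqref{QQSSCC} gives
\[
\S_u(x)\cdot\S_v(x,t)=\sum_w \overline{c}_{u,v}^w(t,0)\cdot\S_w(x,t),
\]
so the whole claim reduces to checking that $\overline{c}_{u,v}^w(t,0)\in\mathbb{Z}_{\geq 0}[t]$ for every $w$.

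For this I would substitute $y=0$ into the Schubert pipe puzzle formula \eqref{POIU}. By definition $\wt_0(\pi)=\prod_{(i,j)}(t_j-y_i)$, the product ranging over the empty tiles $\BPD{\O}$ of $\pi$; hence $\wt_0(\pi)|_{y=0}=\prod_{(i,j)}t_j$ is a monomial in the $t$-variables with coefficient $1$ (and equals $1$ if $\pi$ has no empty tile). Since $\PP_0(u,v,w)$ is finite, summing these monomials yields
\[
\overline{c}_{u,v}^w(t,0)=\sum_{\pi\in\PP_0(u,v,w)}\prod_{(i,j)}t_j\ \in\ \mathbb{Z}_{\geq 0}[t],
\]
which is exactly the asserted positivity. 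Via Proposition~\ref{Coro:SxxSxt} this also records that $\partial_{w/v}\S_u(x)\in\mathbb{Z}_{\geq 0}[x]$ for $u,v$ with separated descents, the confirmation of Kirillov's conjecture discussed in Section~\ref{KiriCon}.

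The only point requiring a word of care is that \eqref{QQSSCC} may a priori involve permutations $w\in S_{n'}$ with $n'>n$; here one invokes Remark~\ref{TREWW}, embedding $S_n\hookrightarrow S_{n'}$ and regarding $u,v$ as elements of $S_{n'}$, for which they retain their separated descents at $k$, so that Theorem~\ref{HHUU} still applies and the same monomial argument goes through verbatim. I do not expect any genuine obstacle: the substance of the work lies entirely in Theorem~\ref{HHUU} (equivalently, the $\beta=0$ case of Theorem~\ref{thm:mainTh}), and this corollary is an immediate consequence of it.
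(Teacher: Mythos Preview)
Your proposal is correct and follows essentially the same approach as the paper, which obtains the corollary simply by setting $y=0$ in Theorem~\ref{HHUU} so that each pipe-puzzle weight becomes a nonnegative monomial in $t$. Your added remarks about Remark~\ref{TREWW} and the link to Kirillov's conjecture are consistent with the paper's treatment.
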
 

Combining Proposition \ref{Coro:SxxSxt} with 
Corollary \ref{Coro:SxxSxt-2} enables  us to confirm Kirillov's conjecture for permutations with separated descents. 

\begin{Coro}
Kirillov's conjecture is true  for $u$ and $v$ with separated descents and arbitrary $w$. 
\end{Coro}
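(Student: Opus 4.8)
The plan is to obtain the corollary as a formal consequence of the two preceding statements, with Theorem~\ref{HHUU} (equivalently Corollary~\ref{Coro:SxxSxt-2}) supplying the positivity and Proposition~\ref{Coro:SxxSxt} supplying the translation into skew divided differences. No new combinatorics is needed; all the content already sits in the pipe puzzle formula.

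First I would observe that Corollary~\ref{Coro:SxxSxt-2} is immediate from Theorem~\ref{HHUU}: after the specialization $y=0$, the Schubert weight $\wt_0(\pi)=\prod_{(i,j)}(t_j-y_i)$ of a Schubert pipe puzzle $\pi\in\PP_0(u,v,w)$ becomes the monomial $\prod_{(i,j)}t_j$, so $\bar c_{u,v}^w(t,0)=\sum_{\pi\in\PP_0(u,v,w)}\prod_{(i,j)}t_j\in\mathbb{Z}_{\geq 0}[t_1,t_2,\ldots]$ whenever $u,v$ have separated descents. Then, by Proposition~\ref{Coro:SxxSxt}, one has $\partial_{w/v}\S_u(x)=\bar c_{u,v}^w(x,0)$, which is the same polynomial with the variables $t$ renamed to $x$; hence $\partial_{w/v}\S_u(x)\in\mathbb{Z}_{\geq 0}[x_1,x_2,\ldots]$, which is exactly Kirillov's assertion.

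The one genuine point to check — and what I expect to be the only mild obstacle — is the scope of the permutations. Kirillov's conjecture ranges over all triples $(u,v,w)$, while our hypotheses constrain only $u$ and $v$ (to having separated descents), with $w$ arbitrary; in particular $w$ may require a larger symmetric group than the one visibly containing $u$ and $v$. I would handle this exactly as in Remark~\ref{TREWW}: embed $u,v,w$ into $S_{n'}$ for $n'$ sufficiently large, note that this embedding preserves the separated-descent condition on $u$ and $v$, and then apply Theorem~\ref{HHUU} there. I would also confirm that the skew operator $\partial_{w/v}$ of \cite[Definition~4]{Kirillov07} coincides with the one characterized by \eqref{eq:defofskewop} that is used in Proposition~\ref{Coro:SxxSxt}, and that Kirillov's \cite[Conjecture~1]{Kirillov07} is literally the membership $\partial_{w/v}\S_u(x)\in\mathbb{Z}_{\geq 0}[x_1,x_2,\ldots]$; both facts are recorded in the excerpt, so nothing further is required.
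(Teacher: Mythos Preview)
Your proposal is correct and follows precisely the paper's approach: the corollary is stated immediately after the sentence ``Combining Proposition~\ref{Coro:SxxSxt} with Corollary~\ref{Coro:SxxSxt-2} enables us to confirm Kirillov's conjecture for permutations with separated descents,'' with no further proof given. Your write-up simply makes explicit the specialization $y=0$ in Theorem~\ref{HHUU} and the stability under the embedding of Remark~\ref{TREWW}, both of which the paper leaves implicit.
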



\appendix

\section{Left Demazure operators}
\label{sec:indform}

Define \emph{the (left) Demazure operator} by 
$$\varpi_i f = -\frac{(1+\beta t_{i})f-(1+\beta t_{i+1})f|_{t_i\leftrightarrow t_{i+1}}}{t_i-t_{i+1}}.$$

\begin{Prop}\label{Prop:LDO}
We have 
\begin{align}\label{eq:varpiGw}
    \varpi_i\G_w(x,t)=\begin{cases}
    \G_{s_iw}(x,t), & \text{if }s_iw<w,\\[5pt]
    -\beta\G_{w}(x,t), & \text{if }s_iw>w.\\
\end{cases}
\end{align}
\end{Prop}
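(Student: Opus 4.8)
The plan is to derive \eqref{eq:varpiGw} directly from the two defining properties of $\G_w(x,t)$, exploiting that $\varpi_i$ involves only the variables $t_i,t_{i+1}$ while each Demazure operator $\pi_j$ involves only $x_j,x_{j+1}$; in particular $\varpi_i\pi_j=\pi_j\varpi_i$ for all $i,j$. First I would record, by direct inspection of the formula defining $\varpi_i$, three elementary facts: (a) $\varpi_i f$ is always symmetric under $t_i\leftrightarrow t_{i+1}$; (b) if $f$ is symmetric in $t_i,t_{i+1}$ then $\varpi_i f=-\beta f$, so that $\varpi_i^2=-\beta\varpi_i$ by (a), and moreover $\varpi_i(fg)=f\,\varpi_i(g)$ for such $f$; (c) $\varpi_i(z\ominus t_i)=1$ whenever the variable $z$ is not $t_i$ or $t_{i+1}$, since $(1+\beta t_i)(z\ominus t_i)=z-t_i$. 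The mirror statements for $\pi_j$ hold by the same computation: $\pi_j f=-\beta f$ on functions symmetric in $x_j,x_{j+1}$, $\pi_j(fg)=f\pi_j(g)$ for such $f$, and $\pi_j(x_j\ominus z)=1$ for $z$ distinct from $x_j,x_{j+1}$.

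Next I would treat the base case $\varpi_i\G_{w_0}=\G_{s_iw_0}$, where $w_0=n(n{-}1)\cdots1$ is the longest element of $S_n$; here we fix $n$ with $w\in S_n$, so $1\le i\le n-1$. Since $w_0 s_j w_0=s_{n-j}$ we have $s_iw_0=w_0 s_{n-i}$, and because $w_0$ has a descent at position $n-i$, the second defining property of $\G$ gives $\G_{s_iw_0}=\G_{w_0 s_{n-i}}=\pi_{n-i}\G_{w_0}$. Now write $\G_{w_0}=\prod_{a+b\le n}(x_a\ominus t_b)=(x_{n-i}\ominus t_i)\cdot h$. The factors of $h$ involving $t_i$ or $t_{i+1}$ are exactly $\prod_{a\le n-i-1}(x_a\ominus t_i)(x_a\ominus t_{i+1})$, so $h$ is symmetric in $t_i,t_{i+1}$ and (b)--(c) give $\varpi_i\G_{w_0}=h\cdot\varpi_i(x_{n-i}\ominus t_i)=h$; symmetrically, the factors of $h$ involving $x_{n-i}$ or $x_{n-i+1}$ are exactly $\prod_{b\le i-1}(x_{n-i}\ominus t_b)(x_{n-i+1}\ominus t_b)$, so $h$ is symmetric in $x_{n-i},x_{n-i+1}$ and the mirror statements give $\pi_{n-i}\G_{w_0}=h$. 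Hence $\varpi_i\G_{w_0}=h=\pi_{n-i}\G_{w_0}=\G_{s_iw_0}$.

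Then I would bootstrap to a general $w\in S_n$. Using $\ell(w_0 x)=\ell(w_0)-\ell(x)$ for all $x$, the recursion for $\pi_j$ and the braid relations show $\G_w=\pi_v\,\G_{w_0}$, where $v:=w_0w$ and $\pi_v$ is the composite divided-difference operator attached to any reduced word of $v$. If $s_iw<w$, then commuting $\varpi_i$ past $\pi_v$ gives $\varpi_i\G_w=\pi_v\,\varpi_i\G_{w_0}=\pi_v\,\G_{s_iw_0}$, and since $\ell(s_iw)=\ell(s_iw_0)-\ell(v)$, the standard fact that $\pi_v\G_u=\G_{uv}$ whenever $\ell(uv)=\ell(u)-\ell(v)$ yields $\pi_v\,\G_{s_iw_0}=\G_{s_iw_0 v}=\G_{s_iw}$. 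This proves the first case. For the second case, if $s_iw>w$ then $s_i(s_iw)=w<s_iw$, so the first case gives $\varpi_i\G_{s_iw}=\G_w$, and hence $\varpi_i\G_w=\varpi_i^2\G_{s_iw}=-\beta\,\varpi_i\G_{s_iw}=-\beta\,\G_w$ by (b).

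I expect the only genuine work to be the base-case identity and the length bookkeeping in the last step — checking that one reduced word of $v=w_0w$ descends with strictly decreasing length from both $w_0$ and $s_iw_0$, so that $\pi_v$ carries $\G_{w_0}$ to $\G_w$ and $\G_{s_iw_0}$ to $\G_{s_iw}$. The manipulations with $\varpi_i$ themselves are short and formal.
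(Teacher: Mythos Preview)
Your argument is correct and follows essentially the same route as the paper's: compute the base case $\varpi_i\G_{w_0}=\G_{s_iw_0}$ directly (both you and the paper identify this with $\pi_{n-i}\G_{w_0}$), then commute $\varpi_i$ past the $\pi_j$'s to bootstrap to general $w$. The only organizational difference is that the paper packages the induction using the $0$-Hecke product (after normalizing $\beta=-1$), which lets it treat the two cases of \eqref{eq:varpiGw} uniformly in a single line; you instead handle $s_iw<w$ by the length-additivity check and then deduce $s_iw>w$ from $\varpi_i^2=-\beta\varpi_i$, which has the mild advantage of working for arbitrary $\beta$ without any renormalization. One notational caution: with the paper's convention $\pi_v=\pi_{i_1}\cdots\pi_{i_\ell}$ for $v=s_{i_1}\cdots s_{i_\ell}$ one has $\pi_v\G_u=\G_{uv^{-1}}$ rather than $\G_{uv}$, so either take $v=w^{-1}w_0$ or make explicit that you are using the opposite ordering; your length computations are consistent either way.
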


A geometric proof of Proposition \ref{Prop:LDO} can be  found  in \cite{MNS22}. Here,  we provide an algebraic proof. To this end, we need  the \emph{Hecke product} on permutations:
$$
s_i*w=\begin{cases}
    s_iw,& \text{if }s_iw>w,\\[2pt]
    w,& \text{if }s_iw<w,\\
\end{cases}\quad\text{and}\quad
w*s_i=\begin{cases}
    ws_i,& \text{if }ws_i>w,\\[2pt]
    w,& \text{if }ws_i<w.\\
\end{cases}
$$
This  defines a  monoid structure     over $S_\infty$ called the \emph{$0$-Hecke monoid}. 

\begin{proof}[Proof of Proposition \ref{Prop:LDO}]
Without loss of generality,  we may assume $\beta=-1$. 
Suppose that  $w\in S_n$. 
Let $w_0=n\cdots 21$ be the longest element in $S_n$. 
Denote
\begin{align*}
    \G^w(x,t) &= \G_{w_0w}(x,t).
\end{align*}
Then  \eqref{eq:piiG} can be rewritten  as
\begin{align}\label{eq:piGupw}
    \pi_i\G^w = \G^{w*s_i}.
\end{align}
Note that the identity in \eqref{eq:varpiGw} can be restated as 
\begin{align}\label{eq:varpiGupw}
    \varpi_i\G^w = \G^{s_{n-i}*w}.
\end{align}

We  prove \eqref{eq:varpiGupw} by induction on length. 
When $w=\operatorname{id}$, it follows from direct computation that 
\begin{align*}
\varpi_i\G^{\operatorname{id}}(x,t) = 
\varpi_i\G_{w_0}(x,t) 
& = \prod_{\begin{subarray}{c}
a+b\leq n\\
(a,b)\neq (n-i,i)
\end{subarray}} (x_a\ominus t_b),
\end{align*}
which coincides with 
$\G^{s_{n-i}}(x,t)
=\G_{w_0s_{n-i}}(x,t)=\pi_{n-i}\G_{w_0}(x,t)$. 
\bigbreak
For $\ell(w)>0$, 
one can find an index $j$ such that $ws_j<w$, and so by induction, 
\begin{align*}
    \varpi_i\G^{w} & = \varpi_i\pi_{j}\G^{ws_j}
      = \pi_{j}\varpi_i\G^{ws_j}
      = \pi_{j}\G^{s_{n-i}*ws_j}
      = \G^{s_{n-i}*ws_j*s_j}
      = \G^{s_{n-i}*w}.
\end{align*}
Here, we used the fact that the operators  $\pi_j$ and $\varpi_j$ commute in the second equality, and \eqref{eq:piGupw} in the fourth equality.  
\end{proof}


Now, we can give proofs of   Propositions \ref{indonu} and \ref{indonw}. 

\begin{proof}[Proof of Proposition \ref{indonu}]
We introduce another operator 
$$\varphi_i f= -\frac{(1+\beta y_i)f-(1+\beta y_{i+1})f|_{y_i\leftrightarrow y_{i+1}}}{y_i-y_{i+1}},$$
which is the same   as the operator  $\varpi_i$, but acts on the variable  $y$. 
Assume that $s_iu<u$. 
Applying $\varphi_i$ to \eqref{eq:cuvwtoind}, by Proposition \ref{Prop:LDO}, the left-hand side is 
$$
\G_{s_iu}(x,y)\cdot \G_{v}(x,t)=\sum_w c_{s_iu,v}^w(t,y)\cdot \G_{w}(x,t).$$
While the right-hand side is
$$\sum_w \varphi_ic_{u, v}^w(t,y)\cdot \G_{w}(x,t).$$
Comparing the coefficients of $\G_{w}(x,t)$, we are given 
$c_{s_iu,v}^w=\varphi_i c_{u, v}^w$, as desired. 
\end{proof}

\begin{proof}[Proof of Proposition \ref{indonw}]
Apply $\varpi_i$ to \eqref{eq:cuvwtoind}. By Proposition \ref{Prop:LDO}, the left-hand side is 
$$
\begin{cases}
    \G_{u}(x,y)\cdot \G_{s_iv}(x,t)=\sum_w c_{u,s_iv}^w(t,y)\cdot \G_{w}(x,t), & s_iv<v,\\[5pt]
    -\beta\G_{u}(x,y)\cdot \G_{v}(x,t)=\sum_w -\beta c_{u, v}^w(t,y)\cdot \G_{w}(x,t), & s_iv>v.\\
\end{cases}$$
To compute the right-hand side, we use the following property of $\varpi_i$:
\begin{align}\label{eq:LeibnizforLD}
    \varpi_i(fg) & = (f|_{t_i\leftrightarrow t_{i+1}})(\varpi_ig)
-\frac{1+\beta t_i}{t_i-t_{i+1}}(f-f|_{t_i\leftrightarrow t_{i+1}})g.
\end{align}
By \eqref{eq:LeibnizforLD} and Proposition \ref{Prop:LDO},  the right-hand side is 
\begin{align*}
    & \sum_w \varpi_i\big(c_{u, v}^w\cdot \G_{w}(x,t)\big)\\
    & = \sum_{w}\left((c_{u, v}^w|_{t_i\leftrightarrow t_{i+1}})\varpi_i\G_w
    -\frac{1+\beta t_i}{t_i-t_{i+1}}(c_{u, v}^w-c_{u, v}^w|_{t_i\leftrightarrow t_{i+1}})\G_w\right) \\
    & = \sum_{s_iw<w}\left((c_{u, v}^w|_{t_i\leftrightarrow t_{i+1}})\G_{s_iw}
    -\frac{1+\beta t_i}{t_i-t_{i+1}}(c_{u, v}^w-c_{u, v}^w|_{t_i\leftrightarrow t_{i+1}})\G_w\right)\\ 
    &\quad +\sum_{s_iw>w}\left(-\beta(c_{u, v}^w|_{t_i\leftrightarrow t_{i+1}})\G_w
    -\frac{1+\beta t_i}{t_i-t_{i+1}}(c_{u, v}^w-c_{u, v}^w|_{t_i\leftrightarrow t_{i+1}})\G_w\right) \\
    & = -\sum_{s_iw<w} \frac{1+\beta t_i}{t_i-t_{i+1}}(c_{u, v}^w-c_{u, v}^w|_{t_i\leftrightarrow t_{i+1}})\G_w\\
    &\quad + \sum_{s_iw>w}
    \left(c_{u,v}^{s_iw}|_{t_i\leftrightarrow t_{i+1}}
    -\beta c_{u,v}^w|_{t_i\leftrightarrow t_{i+1}}
    -\frac{1+\beta t_i}{t_i-t_{i+1}}(c_{u,v}^w-c_{u,v}^w|_{t_i\leftrightarrow t_{i+1}})\right) \G_w\\
    & = -\sum_{s_iw<w} \frac{1+\beta t_i}{t_i-t_{i+1}}(c_{u,v}^w-c_{u,v}^w|_{t_i\leftrightarrow t_{i+1}})\G_w\\
    &\quad + \sum_{s_iw>w}
    \left(c_{u,v}^{s_iw}|_{t_i\leftrightarrow t_{i+1}}
    -\frac{1+\beta t_i}{t_i-t_{i+1}}c_{u,v}^w
    +\frac{1+\beta t_{i+1}}{t_i-t_{i+1}}c_{u,v}^w|_{t_i\leftrightarrow t_{i+1}}\right) \G_w.
\end{align*}
Extracting  the coefficients of $\G(w)$ with $s_iw>w$ on both sides, we deduce  that 
\begin{equation*}\label{IHVF}
    c_{u,v}^{s_iw}|_{t_i\leftrightarrow t_{i+1}}
=\frac{1+\beta t_i}{t_i-t_{i+1}}c_{u,v}^w
-\frac{1+\beta t_{i+1}}{t_i-t_{i+1}}c_{u,v}^w|_{t_i\leftrightarrow t_{i+1}}
+\begin{cases}
c_{u,s_iv}^w, & s_iv<v,\\[5pt]
-\beta c_{u,v}^w, & s_iv>v,
\end{cases}
\end{equation*}
which coincides with \eqref{PURW-1} after the variable exchange $t_i\leftrightarrow t_{i+1}$.
\end{proof}

\end{document}